\documentclass[10pt]{amsart}

\usepackage[margin=2.5cm,bottom=2.5cm,top=2.5cm]{geometry}
\usepackage{mathtools}
\mathtoolsset{showonlyrefs}
\usepackage[T1]{fontenc} \usepackage[utf8]{inputenc}

\usepackage{amsmath,amssymb,amsthm}
\usepackage[dvips]{graphicx}
\usepackage{color}
\usepackage{hyperref}
\usepackage{wasysym}
\newtheorem{lem}{Lemma}[section]
\newtheorem{prop}{Proposition}[section]

\newtheorem{thm}{Theorem}[section]

\theoremstyle{definition}

\theoremstyle{remark}

\theoremstyle{remark}
\newtheorem{remark}{Remark}[section]
\numberwithin{equation}{section}

\newcommand{\C}{{\mathbb C}}
\newcommand{\N}{{\mathbb N}}

\newcommand{\R}{{\mathbb R}}
\newcommand{\T}{{\mathbb T}}

\definecolor{blu}{rgb}{0,0,1}

\newcommand{\vertiii}[1]{{\left\vert\kern-0.25ex\left\vert\kern-0.25ex\left\vert #1
    \right\vert\kern-0.25ex\right\vert\kern-0.25ex\right\vert}}

\renewcommand\Re{\mathrm{Re}\,} \renewcommand\Im{\mathrm{Im}\,}

\begin{document}
\title{Growth of Sobolev Norms for $2d$ NLS with harmonic potential}

\author{Fabrice Planchon}
  \address{ Sorbonne Universit\'e, CNRS, IMJ-PRG F-75005 Paris, France}
\email{fabrice.planchon@sorbonne-universite.fr}
\author{Nikolay Tzvetkov}
\address{D\'epartement de Math\'ematiques, Universit\'e de Cergy-Pontoise, 2, 
avenue Adolphe Chauvin, 95302 Cergy-Pontoise  
Cedex, France and Institut Universitaire de France}%
\email{nikolay.tzvetkov@u-cergy.fr}%
\author{Nicola Visciglia}
\address{Dipartimento di Matematica, Universit\`a di Pisa,
Largo Bruno Pontecorvo 5, 56100 Pisa, Italy}%
\email{nicola.visciglia@unipi.it}
\thanks{ The first author was supported by ERC grant ANADEL no 757996, 
the second author by ANR grant ODA (ANR-18-CE40-0020-01), 
the third author by PRIN grant 2020XB3EFL. N.V. acknowledge the Gruppo Nazionale per l' Analisi Matematica, la Probabilit\`a e le loro Applicazioni (GNAMPA) of the Istituzione Nazionale di Alta Matematica (INDAM)} 
\date{\today}

\maketitle
 \par \noindent
 
\centerline {\em Dedicated to Professor Vladimir Georgiev for his 65's birthday}
 
\begin{abstract}
We prove polynomial upper bounds on the growth of solutions to  $2d$ cubic NLS where the Laplacian is confined by the harmonic potential. Due to better bilinear effects our bounds improve on those available for the $2d$ cubic NLS in the periodic setting: our growth rate for a Sobolev norm of order $s=2k$, $k\in \mathbb{N}$, is $t^{2(s-1)/3+\varepsilon}$.
In the appendix we provide an direct proof, based on integration by parts, of bilinear estimates associated with the harmonic oscillator.
\end{abstract}

\section{Introduction}

In recent years, growth of Sobolev norms for solutions to nonlinear dispersive equations generated a huge interest, in relation with weak turbulence phenomena. Concerning upper bounds, we quote the pioneering work of Bourgain \cite{b1} and its extension in a series of subsequent papers (\cite{CDKS},\cite{CKO}, \cite{D}, \cite{PTV}, \cite{So}, \cite{S}, \cite{Z} to quote only a few of them).
On the other end, growth of Sobolev norm cannot occur
in settings where the  dispersive effect is too strong. For instance consider
the translation invariant cubic defocusing NLS on $\R^2$. Then \cite{Dod} proved the long standing conjecture that nonlinear solutions scatter to free waves when time goes to infinity and hence
no growth phenomena is possible in such setting.
\\

We are interested in the growth of solutions to the following nonlinear Schr\"odinger equation:
\begin{equation}\label{harmosc}
\begin{cases}
i \partial_t u +A u\pm u|u|^2=0, \quad (t,x)\in \R \times \R^2\\
u(0,x)=\varphi(x)\in {\mathcal H}^s
\end{cases}
\end{equation}
where $x=(x_1, x_2)$, the operator $A$ is the usual Laplacian with an harmonic potential,
\begin{equation}\label{Agoti}A=-\Delta + |x|^2, \quad \hbox{ where }
\Delta=\partial_{x_1}^2 + \partial_{x_2}^2, \quad  |x|^2=x_1^2+x_2^2\end{equation}
and
$\|\varphi\|_{{\mathcal H}^s}=\|A^{s/2} \varphi\|_{L^2}$, where in general we use the notation $L^p=L^p(\R^2)$.
We shall also denote $L^p_{t,x}=L^p(\R\times \R^2)$ to emphasize the Lebesgue space of space-time dependent functions.\\

Let us first comment briefly about the local Cauchy theory associated with \eqref{harmosc}.
By combining preservation of regularity for the linear flow, $\|e^{itA} \varphi\|_{{\mathcal H}^s}=\|\varphi\|_{{\mathcal H}^s}$
and that ${\mathcal H}^s$ is an algebra for $s>1$, one proves existence of a local solution to \eqref{harmosc} by fixed point; its local time of existence depends
on the ${\mathcal H}^s$ norm of the initial datum. Moreover the solution map is Lipschitz continuous. In order to globalize our solution one can rely on the Brezis-Gallou\"et inequality (see \cite{BG})
provided that
\begin{equation}\label{basichampr}\sup_{t\in (-T_{min}(\varphi), T_{max}(\varphi))} \|u(t,x)\|_{\mathcal H^1}<\infty\end{equation}
where $(-T_{min}(\varphi), T_{max}(\varphi))$, with $T_{min}(\varphi), T_{max}(\varphi)>0$,  is the maximal time interval of existence of the solution associated with \eqref{harmosc}. In particular, assuming \eqref{basichampr}, $T_{max}(\varphi)=T_{min}(\varphi)=\infty$ and a double exponential bound holds:
\begin{equation}\label{doubleexp}\|u(t,x)\|_{\mathcal H^s}\leq C \exp (C\exp (C |t|)).\end{equation}
Solutions to \eqref{harmosc}
satisfy the conservation of the Hamiltonian
$$\frac 12 \|u(t,x)\|_{{\mathcal H}^1}^2 \pm  \frac 14 \|u(t,x)\|_{L^4}^4=const\,,$$
therefore, in the defocusing case, \eqref{basichampr} is automatically satisfied, while in the focusing case it is not granted for free.
Of course,  by using more sophisticated tools, e.g. Bourgain's spaces $X^{s,b}$ 
associated with $i\partial_t + A$, one can deal with initial data at lower regularity than ${\mathcal H}^{1+\varepsilon}$. These $X^{s,b}$ spaces
will play a key role in our analysis as they allow us to exploit a bilinear effect associated with the propagator
$e^{itA}$. They will be defined in Section \ref{cauchysection} where we also provide
more useful facts about Cauchy theory.

Our main goal is to improve \eqref{doubleexp} and prove polynomial upper bounds
for the quantity $\|u(t,x)\|_{{\mathcal H}^s}$ when $t\rightarrow \pm \infty$ with $s>1$. Along the rest of the paper the following equivalence of norms will be useful: for every $s\geq 0$ there exist $C>0$ such that
\begin{equation}\label{equivalence}
\frac 1{C} (\|D^s u\|_{L^2}^2 + \|\langle x\rangle ^s u\|_{L^2}^2)  \leq
\|\varphi\|_{{\mathcal H}^s}^2\leq  C (\|D^s u\|_{L^2}^2 + \|\langle x \rangle^s u\|_{L^2}^2)  
\end{equation}
where $D^s$ is the operator associated with the Fourier multiplier $|\xi|^s$ and 
$\langle x \rangle=\sqrt{1+x_1^2+x_2^2}$.
The proof of the equivalence \eqref{equivalence} is a special case of a more general result proved in \cite{BoTo,DG}.
In particular establishing growth upper bounds on ${\mathcal H}^s$ norm of the solution is equivalent to establish polynomial bounds
on the classical Sobolev norms $H^s$ and the corresponding moment of order $s$. We now state our main result.

\begin{thm}\label{main} 
Let  $\epsilon>0$ and $k\in \N$.  For every global solution $u$ to \eqref{harmosc} such that $u(t,x)\in {\mathcal C}(\R, {\mathcal H}^{2k})$ and \begin{equation}\label{basicham}\sup_{t\in \mathbb{R}} \|u(t,x)\|_{\mathcal H^1}<\infty\end{equation}
there exists a constant $C$ such that
$$
\|D^{2k} u(t, x)\|_{L^{2}} + \|\langle x\rangle ^{2k} u(t,x)\|_{L^2}
\leq  C \langle t\rangle^{\frac{2(2k-1)}3+\epsilon}.$$
\end{thm}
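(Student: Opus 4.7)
The plan is to bound the linear-flow-conserved quantity
\[
E_k(t) := \|A^k u(t)\|_{L^2}^2,
\]
which by \eqref{equivalence} is equivalent to $\|u(t)\|_{\mathcal H^{2k}}^2$. Everything reduces to the almost-conservation estimate
\[
|E_k(t_0+1) - E_k(t_0)| \le C\, E_k(t_0)^{1-\sigma+\epsilon}, \qquad \sigma := \frac{3}{4(2k-1)}, \qquad t_0 \in \R,
\]
with $C$ depending only on the (finite, by \eqref{basicham}) quantity $\sup_t \|u(t)\|_{\mathcal H^1}$. Once this is proved, comparing with the ODE $f' = f^{1-\sigma+\epsilon}$ and iterating over unit intervals gives $E_k(t) \lesssim \langle t\rangle^{1/\sigma + O(\epsilon)} = \langle t\rangle^{4(2k-1)/3+O(\epsilon)}$; taking square roots and reinvoking \eqref{equivalence} yields the announced bound.

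To derive the almost-conservation inequality, I would start from the identity (valid since $A$ is self-adjoint and commutes with itself)
\[
\frac{d}{dt} E_k(t) = \mp 2 \Im \int_{\R^2} A^k(u|u|^2)\, \overline{A^k u}\, dx,
\]
and write $A^k(u|u|^2) = u|u|^2\, A^k u + [A^k, |u|^2]\,u$. The diagonal term pairs with $\overline{A^k u}$ to give a real quantity and hence drops out of the imaginary part; only the commutator survives. After integrating over $[t_0, t_0+1]$, the remaining quadrilinear space-time integrals are estimated in the Bourgain spaces $X^{s,b}$ adapted to $i\partial_t + A$ recalled in Section~\ref{cauchysection}: decompose each of the four factors dyadically, pair the factor carrying the most $A$-powers with one of the intermediate-frequency factors via the bilinear estimate for the harmonic-oscillator free propagator proved in the appendix, and close the two remaining factors by Strichartz / Sobolev, absorbing the low-frequency pieces into $\sup_t \|u(t)\|_{\mathcal H^1}$. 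The exponent $\sigma = 3/(4(2k-1))$ is the one produced by the optimal distribution of the $2k-1$ "free" $A$-powers across the four factors, subject to this bilinear gain.

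The main obstacle is precisely this last multilinear book-keeping. One must simultaneously track the $D$-type and $\langle x\rangle$-type contributions generated by the expansion of $[A^k, |u|^2]$ (the two halves $-\Delta$ and $|x|^2$ of $A$ each produce commutator and multiplier terms), sum the dyadic outputs with only a logarithmic $\epsilon$-loss, and feed the bilinear harmonic-oscillator estimate — which is genuinely stronger than its $\mathbb T^2$ counterpart — at every step. It is this stronger bilinear input that replaces the standard torus exponent $s-1$ by the improved $2(s-1)/3$ stated in the theorem.
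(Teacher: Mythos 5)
Your overall architecture is the right one and matches the paper's: reduce the theorem to an almost--conservation estimate on an interval of fixed length, with increment $\lesssim \|u\|_{\mathcal H^{2k}}^{2-\frac{3}{2(2k-1)}+\epsilon}$, and iterate; your exponent $\sigma=\frac{3}{4(2k-1)}$ is exactly the one the paper produces (Proposition \ref{R2k+2} gives the increment $\|\varphi\|_{\mathcal H^{2k+2}}^{\frac{8k+1}{4k+2}+\delta}$, which is the same arithmetic one index up). The gap is in how you propose to prove that estimate. Writing $A^k(u|u|^2)=|u|^2A^ku+[A^k,|u|^2]u$, you claim the diagonal term drops out of the imaginary part. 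That disposes only of the contribution where all $2k$ powers of $A$ fall on the \emph{unconjugated} distinguished factor, giving $\Im\int|u|^2|A^ku|^2=0$. The other diagonal contribution, schematically $u^2A^k\bar u$ (all derivatives on the conjugated factor inside $|u|^2$), sits inside your commutator and pairs to $\Im\int u^2\,(A^k\bar u)^2$, which does \emph{not} vanish. For this $(2k,2k,0,0)$ term the bilinear estimate transfers only $\tfrac12$ derivative from each high frequency to a low one, and summability over $N_0\sim N_1$ forces you to spend essentially $a_0+a_1>4k-1$ regularity on the two high factors; interpolating against the a priori $\mathcal H^1$ bound then yields only $\|u\|_{\mathcal H^{2k}}^{2-\frac{1}{2k-1}}$, i.e.\ the torus gain and the growth rate $t^{2k-1}$, not $t^{2(2k-1)/3}$. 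No redistribution of the ``free'' $A$-powers fixes this, because both high factors genuinely carry the top regularity.

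What is missing is a normal form / modified energy to exploit the non-resonance of that term. This is precisely the content of Proposition \ref{modifen}: the offending quartic contributions (those with $k+1$ time derivatives, resp.\ all $A$-powers concentrated on two conjugated factors) are written as an exact time derivative of a correction $\mathcal S_{2k+2}$ plus a remainder $\mathcal R_{2k+2}$ in which at least one extra derivative has been shifted onto a low factor. The correction only needs to be controlled pointwise at the two endpoints of the time interval, where its two high factors carry $2k-1$ rather than $2k$ derivatives, so H\"older, Sobolev and interpolation give the harmless exponent $\frac{4k}{2k+1}$ (Proposition \ref{S2k+2}); the remainder has the favourable frequency distribution $(s-1,s-1,2,0)$ on which your bilinear book-keeping does deliver the full $\frac{3}{2(s-1)}$ gain (Proposition \ref{R2k+2}). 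A secondary, more technical difference: the paper replaces $A^k$ by $\partial_t^k$ (Proposition \ref{equibalence}), which commutes cleanly with the nonlinearity and avoids tracking the mixed $\Delta$/$|x|^2$ commutators you mention; this is a convenience rather than a necessity, but without the modified-energy correction your argument cannot reach the stated exponent.
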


Our bound may be compared to the corresponding bound for solutions to
NLS on a generic compact $2-d$ manifold $M^2$ and more specifically on the torus $\T^2$. In fact at the best of our knowledge  
the best known upper bound available  on the growth of the classical Sobolev norm  $H^{2k}(\T^2)$ for solutions to cubic NLS on $\T^2$
is $(1+t)^{2k-1+\epsilon}$, as proved in \cite{Z}, \cite{PTV}. Notice also that in our case we control the 
growth of the moments as well (see also \cite{TV} for a different perspective on the moments).

Theorem \ref{main} may also be compared with \cite[Theorem 2]{CDKS}, where the same bound on the growth of Sobolev norm
was achieved for the translation invariant cubic NLS posed on $\R^2$, at a time where Dodson's definitive result was not available.
As already mentioned, unlike the situation considered in Theorem \ref{main}, where in general scattering
theory is not available, in the euclidean setting one can deduce uniform boundedness of
high order  Sobolev norms, at least in the defocusing situation. 
Nevertheless bounds provided in Theorem \ref{main} are still meaningful and non trivial
in the flat case either, if one considers solutions to the focusing NLS
such that the $H^1$ norm is uniformly bounded. In fact 
under this assumption it is not true in general that the solutions scatter to a free wave and hence
the uniform boundedness of Sobolev norms is not granted.

It would be very interesting to construct solutions to the defocusing \eqref{harmosc} such that the $H^k$ norms do not remain bounded in time for some $k>1$. Unfortunately such results are rare in the context of canonical dispersive models 
(with the notable exception of \cite{HPTV}). 

\section{$X^{s,b}$ framework and linear estimates}

We first define $X^{s,b}$ spaces associated with the harmonic oscillator in dimension two: the spectrum of the harmonic oscillator is 
given by the following set of integers $\{2n+2, n\in \N\}$.
For every $n\in \N$ we shall denote by $\Pi_n$ the orthogonal projector on the eigenspace associated with
the eigenvalue $2n+2$. Then the $X^{s,b}$ norm is given by the expression
$$\|u\|_{X^{s,b}}^2= \sum_{n\in \N} (2n+2)^s \big\|\langle \tau + 2n+2\rangle^{b}  {\mathcal F}_{t\rightarrow \tau} (\Pi_n u(t,x)) \big\|^2_{L^2_{\tau, x}}$$
where $u(t,x)$ is a function globally defined on space-time and ${\mathcal F}_{t\rightarrow \tau}$ denotes the Fourier transform with respect to the time variable.
Along with the $X^{s,b}$ spaces, which are defined for global space-time functions, we also introduce 
its localized version for every $T>0$. More precisely for functions $v(t,x)$ on the strip $(-T, T)\times \R^2$ we define:
$$\|v\|_{X^{s,b}_T}=\inf_{\substack{\tilde v\in X^{s,b}\\ v(t,x)=\tilde v(t,x)_{|(-T,T)\times \R^2}}} \|\tilde v\|_{X^{s,b}}.$$
The main result of this section is the continuity  of suitable linear operators in the Bourgain's spaces
$X^{s,b}_T$. 
\begin{prop}\label{derivative}
For every $\delta\in (0, \frac 12)$, $b\in (0,1)$ there exists $C>0$ such that we have the following estimate for every $T>0$:
\begin{equation}\label{secondoprime}
\|L u\|_{X^{-\frac 12+\delta, \frac 12-\delta+2\delta b}_T}\leq C \|u\|_{X^{\frac 12+\delta,\frac 12-\delta+2\delta b}_T}
\end{equation}
\begin{equation}\label{secondosecond}
\|L u\|_{X^{\delta, (1-\delta)b}_T}\leq C \|u\|_{X^{1+\delta,(1-\delta)b}_T}
\end{equation}
where $L$ can be either $\partial_{x_i}$, $i=1,2$ or multiplication by $\langle x \rangle$.
\end{prop}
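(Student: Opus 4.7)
My plan is to treat both \eqref{secondoprime} and \eqref{secondosecond} as special cases of the single master inequality
\[
\|Lu\|_{X^{s-1,\beta}_T}\leq C\,\|u\|_{X^{s,\beta}_T}
\]
which I would prove for every $s\in\R$ and every $\beta\in[0,1]$; both stated $\beta$-ranges lie in $[0,1]$. A preliminary reduction uses that $L$ is purely spatial and therefore commutes with restriction in time: extending $u$ to an element $\tilde u\in X^{s,\beta}$ with norm comparable to $\|u\|_{X^{s,\beta}_T}$, one has $L\tilde u\in X^{s-1,\beta}$ extending $Lu$, so the truncated bound follows from its untruncated analogue.

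At $\beta=0$ the $X^{s,0}$-norm equals $\|u\|_{L^2_t\mathcal{H}^s_x}$, so the master estimate reduces to the spatial bound $\|Lw\|_{\mathcal{H}^{s-1}}\leq C\|w\|_{\mathcal{H}^s}$. For $s\geq 1$ this follows from \eqref{equivalence}: for $L=\partial_{x_i}$ by a standard Sobolev computation, and for $L=\langle x\rangle\cdot$ by trading a weight $\langle x\rangle^{s-1}$ on the left for $\langle x\rangle^{s}$ on the right. For $s<1$ the estimate is transferred from the positive regime by duality, using $(\partial_{x_i})^{*}=-\partial_{x_i}$ and $\langle x\rangle^{*}=\langle x\rangle$.

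At $\beta=1$ I would use the Plancherel-type equivalence
\[
\|u\|_{X^{s,1}}^{2}\approx\|u\|_{L^2_t\mathcal{H}^{s}_x}^{2}+\|(-i\partial_{t}+A)u\|_{L^2_t\mathcal{H}^{s}_x}^{2}
\]
and the decomposition $(-i\partial_{t}+A)(Lu)=L(-i\partial_{t}+A)u+[A,L]u$. The first piece is controlled by the $\beta=0$ estimate applied to $(-i\partial_{t}+A)u$. For the commutator one computes $[A,\partial_{x_i}]=-2x_{i}$ and $[A,\langle x\rangle]=-(\Delta\langle x\rangle)-2\tfrac{x}{\langle x\rangle}\cdot\nabla$; in either case the result is a differential operator of order at most one with smooth bounded coefficients ($\Delta\langle x\rangle$ and $x/\langle x\rangle$ being bounded together with all their derivatives), and a further appeal to \eqref{equivalence} gives $\|[A,L]u\|_{\mathcal{H}^{s-1}}\leq C\|u\|_{\mathcal{H}^s}$. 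Complex interpolation in $\beta$ between the two endpoint estimates (with $s$ fixed) then delivers the master inequality for every $\beta\in[0,1]$, covering both $\beta=\tfrac{1}{2}-\delta+2\delta b$ and $\beta=(1-\delta)b$.

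The principal technical obstacle is the commutator bound for $L=\langle x\rangle\cdot$ at the endpoint $\beta=1$: since this commutator mixes differentiation with polynomial-type weights, one has to check that it respects simultaneously the $D^{s-1}$-piece and the $\langle x\rangle^{s-1}$-weighted piece of the norm \eqref{equivalence}, which requires tracking a short chain of Leibniz-type commutators with bounded-coefficient error terms (and, for $s<1$, running the same argument through the duality step).
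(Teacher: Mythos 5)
Your overall architecture (reduce to the untruncated spaces, prove a spatial mapping property at $\beta=0$, handle $\beta=1$ via the equivalence $\|u\|_{X^{s,1}}\approx\|u\|_{X^{s,0}}+\|(i\partial_t+A)u\|_{X^{s,0}}$ and the commutator $[A,L]$, then interpolate) uses the same ingredients as the paper, but you organize the interpolation differently: you fix $s$ and interpolate only in $\beta$, which forces you to establish the spatial estimate $\|Lw\|_{\mathcal H^{s-1}}\leq C\|w\|_{\mathcal H^{s}}$ and the commutator bounds at the \emph{fractional and negative} regularities actually needed (for \eqref{secondoprime} the target space is $\mathcal H^{-\frac12+\delta}$). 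The paper deliberately avoids this: it only ever proves integer-regularity spatial estimates ($\mathcal H^1\to L^2$, $\mathcal H^2\to\mathcal H^1$) and one commutator computation at the level $(s,b)=(0,1)$, where everything reduces to $\||x|u\|_{L^2}\leq\|\sqrt A u\|_{L^2}$, and then reaches $X^{\frac12+\delta,\beta}\to X^{-\frac12+\delta,\beta}$ by duality and interpolation along slanted lines in the $(s,b)$-plane (interpolating $X^{0,0}\to X^{-1,0}$ with $X^{1,1}\to X^{0,1}$ to get $X^{1/2,1/2}\to X^{-1/2,1/2}$, then interpolating again with $X^{1,b}\to X^{0,b}$). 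Your route is viable but pushes the hard part into the spatial estimates at non-integer order.

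Two concrete gaps remain in your write-up. First, your claim that ``for $s<1$ the estimate is transferred from the positive regime by duality'' only covers $s\leq 0$: duality sends the estimate at level $s$ to the estimate at level $1-s$, so the range $0<s<1$ --- which contains the value $s=\tfrac12+\delta$ you actually need for \eqref{secondoprime} --- maps back into itself and is not reached from $s\geq1$; you must additionally interpolate in $s$ between the $s=0$ (dual) and $s=1$ (direct) cases. Second, at the $\beta=1$ endpoint with $s=\tfrac12+\delta$ your commutator terms must be bounded in $\mathcal H^{-\frac12+\delta}$: for $L=\partial_{x_i}$ you need $\|x_iu\|_{\mathcal H^{-1/2+\delta}}\leq C\|u\|_{\mathcal H^{1/2+\delta}}$, and for $L=\langle x\rangle$ you need $\|b\cdot\nabla u\|_{\mathcal H^{-1/2+\delta}}\leq C\|u\|_{\mathcal H^{1/2+\delta}}$ with $b=x/\langle x\rangle$. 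Neither follows from ``a further appeal to \eqref{equivalence}'', since \eqref{equivalence} is only stated (and only proved in the cited references) for nonnegative order; these bounds are themselves instances of the master inequality at negative target regularity and require the same duality--interpolation machinery you are trying to build, so as written the argument is circular at this point. Both gaps are repairable --- either by adding the interpolation in $s$ for the spatial estimates before running the $\beta$-interpolation, or by switching to the paper's scheme of interpolating the $X^{s,b}$ estimates themselves --- but they must be closed explicitly.
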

\begin{proof} 
We  prove Proposition \ref{derivative} without the time localization. The corresponding version in localized Bourgain' spaces is straightforward. We will prove the following bounds:
\begin{align}\label{primotir}\|L u\|_{X^{0, b}}\leq C \|u\|_{X^{1,b}}, \quad b\in [0,1]\\
\label{xsb213tir}\|L u\|_{X^{1, 0}}\leq C \|u\|_{X^{2,0}}.
\end{align}
Notice that \eqref{secondosecond} follows by interpolation between \eqref{primotir} and \eqref{xsb213tir}.
Moreover we get \begin{equation}\label{quartotir}\|L u\|_{X^{-1, 0}}\leq C \|u\|_{X^{0,0}}
\end{equation} 
by duality from 
\eqref{primotir} for $b=0$, and we also get \begin{equation}\label{quinto}\|L u\|_{X^{-1/2, 1/2}}\leq C\|u\|_{X^{1/2,1/2}}
\end{equation} 
by interpolation between \eqref{quartotir} and \eqref{primotir} for 
$b=1$. Then \eqref{secondoprime} follows, interpolating \eqref{primotir} and
\eqref{quinto}. Hence we focus on \eqref{primotir} and \eqref{xsb213tir}.
Since the proof is slightly different depending from the operator
$L$ that we consider, we consider two cases.\\
\\
{\em First case: proof of \eqref{primotir} and \eqref{xsb213tir} for $L u=\partial_{x_i} u$}
\\
\\ 
First we prove that, for space-time dependent functions $u(t,x)$ we have
\begin{equation}\label{xsb21tir}\|\partial_{x_i} u\|_{X^{0, 0}}\leq C \|u\|_{X^{1,0}}\,.
\end{equation}
This estimate is a consequence of the following one
for time independent functions $v(x)$:
$$\|\partial_{x_i} v \|_{L^2}\leq C \|\sqrt A v\|_{L^2}$$
that in turn follows by
$\|\sqrt A v\|_{L^2}= \|v\|_{{\mathcal H}^1}$ and by recalling \eqref{equivalence}
for $s=1$.
Next we prove
\begin{equation}
  \label{xsb2tir}\|\partial_{x_i} u\|_{X^{0,1}}\leq C \|u\|_{X^{1,1}}\,,
\end{equation}
and  by interpolation with \eqref{xsb21tir}, \eqref{primotir} will follow for $L=\partial_{x_i}$.
As
$\|w(t,x)\|_{X^{0,1}}$ is equivalent to  $\|(i\partial_t + A) w\|_{L^2_{t,x}}+\|w\|_{L^2_{t,x}}$, in order to get \eqref{xsb2tir} we estimate
\begin{align}\label{curcguid}\|(i\partial_t + A) \partial_{x_i} u\|_{L^2_{t,x}}+\|\partial_{x_i} u\|_{L^2_{t,x}}
 & = \|\partial_{x_i} (i\partial_t + A) u + [|x|^2, \partial_{x_i}] u\|_{L^2_{t,x}}+\|\partial_{x_i} u\|_{L^2_{t,x}}\\
 & \leq  \|\partial_{x_i} (i\partial_t + A) u\|_{L^2_{t,x}}
   + 2\||x| u\|_{L^2_{t,x}}+\|\partial_{x_i} u\|_{L^2_{t,x}}\,.
   \end{align}
By combining \eqref{xsb21tir} with the following identity
\begin{equation}\label{calcfunctir}
\|\sqrt A v\|_{L^2}^2= (Av, v)=\|\nabla_x v\|_{L^2}^2 + \||x|v\|_{L^2}^2\end{equation}
we can continue \eqref{curcguid} as follows:
\[
  (\dots) \leq \|(i\partial_t + A) u\|_{X^{1,0}}+ 3 \|\sqrt A u\|_{L^2_{t,x}}
  \leq \|u\|_{X^{1,1}} + 3 \|u\|_{X^{1,0}}\leq 4 \|u\|_{X^{1,1}}\,.
  \]
and \eqref{xsb2tir}
for $L=\partial_{x_i}$ follows. Next we prove \eqref{xsb213tir}
(where $L=\partial_{x_i}$), namely
\begin{equation}\label{xsb2134tir}\|\partial_{x_i} u\|_{X^{1, 0}}\leq C \|u\|_{X^{2,0}}.
\end{equation}
This estimate is a consequence of the following one
for time independent functions $v(x)$:
$$\|\sqrt A\partial_{x_i} v \|_{L^2}\leq C \|A v\|_{L^2},$$
that in turn is equivalent to
\begin{equation}\label{calcfunc3}
(A \partial_{x_i} v, \partial_{x_i}v)\leq C (A v, A v).\end{equation}
As on the r.h.s. we get $\|v\|_{{\mathcal H}^2}^2$, by \eqref{equivalence} and elementary considerations it is sufficient to prove
\begin{equation}\label{....}
\int |x|^2 |\partial_{x_i}v|^2 \leq C( \|D^2 v\|_{L^2}^2 + \|\langle x \rangle^2 v\|_{L^2}^2)\,.
\end{equation}
In turn this last inequality follows by combining integration by parts and the Cauchy-Schwarz inequality:
\begin{align*}
  \int |x|^2 |\partial_{x_i}v|^2 =
-\int |x|^2 \partial_{x_i}^2v \bar v -2 \int  x_i \partial_{x_i}v \bar v 
 & \leq \| \partial_{x_i}^2v\|_{L^2} \||x|^2 v\|_{L^2}
+ 2 \||x|\partial_{x_i} v\|_{L^2} \|v\|_{L^2} 
\\
 & \leq \frac 12 \|D^2 u\|_{L^2}^2 + \frac 12  \|\langle x\rangle^2 v\|_{L^2}^2 + \frac 12 \||x|\partial_{x_i} v\|_{L^2}^2
+ 2  \|\langle x\rangle^2 v\|_{L^2}^2\,,
\end{align*}
from which we easily conclude moving $\frac 12 \||x|\partial_{x_i} v\|_{L^2}^2$ to the left-hand side.
\\
\\
{\em Second case: proof of \eqref{primotir} and \eqref{xsb213tir} for $L u=\langle x \rangle u$}
\\
\\
The proof follows the same steps as in the case $L=\partial_{x_i}$, with minor modifications.
First notice that  we have for space-time dependent functions $u(t,x)$ the following estimate:
\begin{equation}\label{xsb21tirtir}\|\langle x \rangle  u\|_{X^{0, 0}}\leq C \|u\|_{X^{1,0}}.
\end{equation}
This is a consequence of the following estimate
for time independent functions $v(x)$:
$$\|\langle x\rangle v \|_{L^2}\leq C \|\sqrt A v\|_{L^2}$$
that in turn follows by noticing that 
$\|\sqrt A v\|_{L^2}= \|v\|_{{\mathcal H}^1}$ and recalling \eqref{equivalence}
for $s=1$. Moreover we have
\begin{equation}\label{xsb2xtir}\|\langle x \rangle u\|_{X^{0,1}}\leq C \|u\|_{X^{1,1}}.
\end{equation}
that by interpolation with
\eqref{xsb21tirtir} implies \eqref{primotir}
for $L=\langle x \rangle$.
In order to prove this estimate recall again that 
$\|w(t,x)\|_{X^{0,1}}$ is equivalent to $\|i\partial_t w+ A w\|_{L^2_{t,x}}+\|w\|_{L^2_{t,x}}$
and hence we compute
\begin{align*}
\|(i\partial_t + A) (\langle x \rangle u)\|_{L^2_{t,x}}+\|\langle x \rangle u\|_{L^2_{t,x}}
 & = \|\langle x\rangle (i\partial_t + A) u + [\Delta, \langle x\rangle] u\|_{L^2_{t,x}}+\|\langle x \rangle u\|_{L^2_{t,x}}
\\
 & \leq  \|\langle x \rangle  (i\partial_t + A) u\|_{L^2_{t,x}}
+ \|2\nabla (\langle x \rangle) \cdot \nabla u + \Delta (\langle x \rangle) u \|_{L^2_{t,x}}
+\|\langle x \rangle u\|_{L^2_{t,x}}
\\
 & \leq C (\|\langle x \rangle (i\partial_t + A) u\|_{L^2_{t,x}} + \|\nabla u\|_{L^2_{t,x}}
+\|\langle x \rangle u\|_{L^2_{t,x}})\,.
\end{align*}
By combining \eqref{xsb21tirtir}  with the identity 
$\|\sqrt A u\|_{L^2_{t,x}}=\|u\|_{{\mathcal H}^1}$ and by recalling \eqref{equivalence}
for $s=1$,
we can proceed with our estimate above,
$$
(...)\leq C( \|(i\partial_t + A) u\|_{X^{1,0}}+ \|\sqrt A u\|_{L^2_{t,x}})
=C(\|u\|_{X^{1,1}} + \|u\|_{X^{1,0}}) \leq C \|u\|_{X^{1,1}}\,.
$$

Next we prove \eqref{xsb213tir}
(where $L=\langle x \rangle$), namely
\begin{equation}\label{xsb2134tir}\|\langle x \rangle u\|_{X^{1, 0}}\leq C \|u\|_{X^{2,0}}.
\end{equation}
This estimate is a consequence of the following one
for time independent functions $v(x)$:
$$\|\sqrt A (\langle x \rangle v) \|_{L^2}\leq C \|A v\|_{L^2}$$
that in turn is equivalent to
\begin{equation}\label{calcfunc3x}
(A (\langle x \rangle v), \langle x \rangle v)\leq C \|v\|_{{\mathcal H}^2}.\end{equation}
By \eqref{equivalence} it is equivalent to 
$$\|\nabla (\langle x \rangle v)\|_{L^2}^2 + \|\langle x\rangle |x| v\|_{L^2}^2
\leq C( \|D^2 v\|_{L^2}^2 + \|\langle x \rangle^2 v\|_{L^2}^2).$$
In turn, developing the gradient on the l.h.s. the estimate above follows from 
$$\int \langle x \rangle^2|\nabla v|^2\leq C( \|D^2 v\|_{L^2}^2 + \|\langle x \rangle^2 v\|_{L^2}^2)$$
whose proof proceeds by integration by parts and Cauchy-Schwarz inequality as we did for \eqref{....}.
\end{proof}
\section{The Cauchy theory in $X^{s,b}$ and consequences}\label{cauchysection}
We first obtain a trilinear estimate, whose
proof heavily relies on the analysis of \cite{Po} (also available as \cite{PoA}); for the sake of completeness, we provide a relatively elementary proof of the crucial bilinear estimate from \cite{Po} in the appendix, using the bilinear virial techniques from \cite{PlVe}. The only novelty in our trilinear estimate is that we prove a tame estimate, while such an estimate was not needed for the low regularity analysis of \cite{Po}.
We first recall the following key bilinear estimate 
\cite[Theorem~2.3.13]{Po}. There exists $\delta_0\in (0, \frac 12]$ such that for every $\delta\in (0,\delta_0]$ there exists $b'<\frac 12$ and $C>0$ such that:
 \begin{equation}\label{gg}
 \|\Delta_N (u)\, \Delta_M (v)\|_{L^2((0,T); L^2)}\leq C (\min(M,N))^{\delta}
 \Big(\frac{\min(M,N)}{\max(M,N)}\Big)^{\frac{1}{2}-\delta}
 \|\Delta_{N} (u)\|_{X^{0,b'}_T}
 \|\Delta_M (v)\|_{X^{0,b'}_T}
 \end{equation} 
where $\Delta_N$, $\Delta_M$ are the Littelwood-Paley localization associated with
$A$ and $N$, $M$ are dyadic integers.
\begin{prop}\label{poiret}
Let $0<T<1$ and $\epsilon>0$ be fixed. Then there exist $C>0$, $b>1/2$  and $\gamma>0$ such that for $s\geq \varepsilon$:
\begin{equation}\label{est}
\Big\|\int_{0}^t e^{i(t-\tau)A} (u_1(\tau)   u_2(\tau) \bar u_3(\tau)) d\tau\Big\|_{X^{s, b}_T}
\leq C T^\gamma \sum_{\sigma\in {\mathcal S}_3} \|u_{\sigma(1)}\|_{X^{s, b}_T} \|u_{\sigma(2)}\|_{X^{\varepsilon, b}_T}
\| u_{\sigma(3)}\|_{X^{\varepsilon, b}_T}.
\end{equation}
\end{prop}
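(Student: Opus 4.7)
The strategy follows the $X^{s,b}$ trilinear machinery of~\cite{Po}, carried out with care so as to record a tame distribution of derivatives. Fix $b=\tfrac12+\gamma$ with $\gamma>0$ small. The standard Duhamel estimate in localized Bourgain spaces yields
$$\Big\|\int_0^t e^{i(t-\tau)A}F(\tau)\,d\tau\Big\|_{X^{s,b}_T}\lesssim T^{\gamma}\|F\|_{X^{s,-\frac12+2\gamma}_T},$$
which accounts for the $T^\gamma$ factor in the conclusion and reduces Proposition~\ref{poiret} to the purely trilinear bound
$$\|u_1u_2\bar u_3\|_{X^{s,-\frac12+2\gamma}_T}\lesssim\sum_{\sigma\in\mathcal S_3}\|u_{\sigma(1)}\|_{X^{s,b}_T}\|u_{\sigma(2)}\|_{X^{\varepsilon,b}_T}\|u_{\sigma(3)}\|_{X^{\varepsilon,b}_T}.$$

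By the $L^2_{t,x}$-duality $(X^{s,-\frac12+2\gamma}_T)^*=X^{-s,\frac12-2\gamma}_T$, this is equivalent to bounding the quadrilinear form
$$I=\int_{-T}^T\int_{\R^2}u_1\,u_2\,\bar u_3\,\bar g\,dx\,dt$$
by $\|g\|_{X^{-s,\frac12-2\gamma}_T}$ times the right-hand side above. I would decompose each of the four factors using the harmonic-oscillator Littlewood--Paley projectors $\Delta_N$ and expand $I$ as a sum over dyadic $(N_1,N_2,N_3,N_4)$. For each tuple, Cauchy--Schwarz pairs the four factors into two $L^2_{t,x}$ inner products, each of which is controlled by the key bilinear inequality~\eqref{gg}, producing the off-diagonal gain $(\min/\max)^{\frac12-\delta}$ on each pair. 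The pairing is chosen so that this decay is activated whenever two of the four frequencies are widely separated, e.g.\ pair the largest with the dual frequency and the remaining two together.

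By the symmetry of the conclusion in $u_1,u_2,u_3$ (hence the sum over $\sigma$), we may relabel so that $N_1=\max(N_1,N_2,N_3)$. The full $s$-weight is then assigned to $\Delta_{N_1}u_1$, producing $N_1^s$ since $\Delta_{N_1}$ projects onto eigenvalues of size $\sim N_1$; only the $\varepsilon$-weights $N_2^\varepsilon, N_3^\varepsilon$ are extracted from the other two inputs, and $N_4^{-s}$ comes from $g$. The remaining positive powers of $N_2$ and $N_3$, together with any excess in $N_4$, are absorbed by the two off-diagonal gains from~\eqref{gg}, after which the four dyadic sums close by Cauchy--Schwarz, using crucially that $\tfrac12-\delta>0$.

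The principal technical obstacle is this last tame summation. In the low-regularity analysis of~\cite{Po} the full $s$-weight may be distributed across all three inputs, which simplifies the dyadic combinatorics; here, because only one input is allowed to carry $s$ derivatives while the other two carry only $\varepsilon$, one must verify that in every frequency regime the off-diagonal decay $(\min/\max)^{\frac12-\delta}$ is strong enough to dominate the excess $N_2^{s-\varepsilon}N_3^{s-\varepsilon}$ (and any positive power of $N_4$). This amounts to a careful book-keeping of dyadic sums in which the strictly positive exponent $\tfrac12-\delta$ in~\eqref{gg} and the $\varepsilon$-margin on the low-frequency factors provide exactly the slack required for geometric summation.
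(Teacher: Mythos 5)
Your overall route is the same as the paper's: Duhamel reduction to a trilinear bound in $X^{s,-b'}_T$, duality, Littlewood--Paley decomposition with respect to $A$, Cauchy--Schwarz pairing of the four factors, the bilinear gain \eqref{gg}, and dyadic bookkeeping with the full $s$-weight on the highest input frequency. However, there is a genuine gap in the frequency regime where the dual frequency $N_0$ (your $N_4$) is much larger than $N_1=\max(N_1,N_2,N_3)$. After normalization the dual weight contributes a \emph{positive} power $N_0^{s}$, and the only decay in $N_0$ available from \eqref{gg} is $(N_3/N_0)^{\frac12-\delta}$; for $s\geq\frac12$ (the relevant range, since the proposition is applied with $s$ up to $2k+2$) the net factor $N_0^{s-\frac12+\delta}$ grows in $N_0$, and the sum over $N_0\gg N_1$ diverges. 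Your claim that ``any excess in $N_4$'' is absorbed by the off-diagonal gains is therefore false in this regime. On the torus this regime is vacuous because the product of three functions with frequencies $\leq N_1$ is supported at frequencies $\lesssim N_1$, but for the harmonic oscillator products of Hermite eigenfunctions have spectral tails, so the case $N_0\geq N_1^{1+\delta}$ genuinely occurs and must be treated.

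The paper handles it by a separate non-resonance argument: for $N_0\geq N_1^{1+\delta}$ it invokes the $2d$ version of \cite[Lemme~2.1.23]{Po}, which bounds the quadrilinear integral by $C_K N_0^{-K}$ times the product of $X^{0,b'}_T$ norms for arbitrary $K$, after which all four dyadic sums close trivially. Only on the complementary region $N_0\leq N_1^{1+\delta}$ does the paper run the Cauchy--Schwarz plus \eqref{gg} argument you describe, and there the constraint $N_0\leq N_1^{1+\delta}$ is exactly what converts the dangerous factor $N_0^{s-\frac12+\delta}$ into a negative power of $N_1$ (for $\delta$ small enough depending on $\varepsilon$). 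You need to add this case splitting and the almost-orthogonality input; without it the summation over the output frequency does not close.
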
 
\begin{proof}
Using standard arguments (see for instance \cite[Proposition~3.3]{BGT_ens}), it suffices to prove that 
$$
\|u_1 u_2 \bar u_3 \|_{X^{s,-b'}_T}\leq C \sum_{\sigma\in {\mathcal S}_3} \|u_{\sigma(1)}\|_{X^{s, b}_T} \|u_{\sigma(2)}\|_{X^{\varepsilon, b}_T}\| u_{\sigma(3)}\|_{X^{\varepsilon, b}_T}
$$
for some $b>1/2$, $b'<1/2$ such that $b+b'<1$.
Using duality, the last estimate is equivalent to:
\begin{equation*}
\Big|
\int \int u_1 u_2 \bar u_3 \bar u_0
\Big|
\leq C
\|u_0\|_{X^{-s,b'}_T}
 \sum_{\sigma\in {\mathcal S}_3} \|u_{\sigma(1)}\|_{X^{s, b}_T} \|u_{\sigma(2)}\|_{X^{\varepsilon, b}_T}\| u_{\sigma(3)}\|_{X^{\varepsilon, b}_T}\,.
 \end{equation*}
 where $\int \int$ denotes a space-time integral on $\R^2\times \R$ with respect to the Lebesgue measure $dx dt$.
 We now perform a Littlewood-Paley decomposition in the left-hand side of the last inequality and  using a symmetry argument, we are reduced to obtaining a bound on
 $$
  \Big| \sum_{N_1\geq N_2 \geq N_3}\sum_{N_0}
\int \int  \Delta_{N_0}( \bar u_0) \Delta_{N_1} (u_1)  \Delta_{N_2} (u_2 ) \Delta_{N_3} (\bar u_3)\Big|,
 $$
 where the summation is meant over  dyadic values of $N_1$, $N_2$, $N_3$ and $N_0$. 
 The other possible orders of magnitudes of $N_1$,  $N_2$ and $N_3$ provide all permutations involved in the sum of the right hand-side of \eqref{est}.
 \\
 \\
 {\em First case: $N_0\geq N_1^{1+\delta}$ for some $\delta>0$}
 \\
 \\
 In this case, we can apply the $2d$ version of \cite[Lemme~2.1.23]{Po} to obtain that for every $K$ there is $C_K$ such that 
 \begin{equation*}
 \Big |\int \int  \Delta_{N_0}(\bar u_0) \Delta_{N_1} (u_1)  \Delta_{N_2} (u_2 ) \Delta_{N_3} (\bar u_3)\Big|
 \leq C_K N_0^{-K} 
 \|\Delta_{N_0} u_0\|_{X^{0,b'}_T} \|\Delta_{N_1} u_{1}\|_{X^{0, b'}_T} \|\Delta_{N_2} u_{2}\|_{X^{0, b'}_T}\|
 \Delta_{N_3} u_{3}\|_{X^{0, b'}_T}\,.
\end{equation*} 
where $b'<\frac 12$.
Now we can readily perform the $N_0$, $N_1$, $N_2$, $N_3$ summations thanks to the large negative power of $N_0$.\\
 \\
 {\em Second case: $N_0\leq N_1^{1+\delta}$, with $\delta>0$ to be chosen later depending  on $\varepsilon$}
 \\
 \\
Combining  Cauchy-Schwarz and \eqref{gg}, we  write
 \begin{align*}
 \Big |\int \int  \Delta_{N_0}(\bar u_0) \Delta_{N_1} (u_1)  \Delta_{N_2} (u_2 ) \Delta_{N_3} (\bar u_3) \Big|\
 \leq & \|\Delta_{N_1} (u_1)  \Delta_{N_2} (u_2 ) \|_{L^2((0,T);L^2)} \|\Delta_{N_0}( u_0) \Delta_{N_3} (u_3)  \|_{L^2((0,T); L^2)}
 \\
 \leq  & C
 (N_2 N_3)^{\delta}
  \Big(\frac{N_2 N_3}{N_0 N_1}\Big)^{\frac{1}{2}-\delta}
 \prod_{j=1}^4
 \|\Delta_{N_j} (u_j)\|_{X^{0,b'}_T}\,.
  \end{align*}
  A normalization yields that it suffices to prove the following inequality:
   \begin{multline*}
  \sum_{N_1\geq N_2\geq N_3} \sum_{N_0\leq N_1^{1+\delta}}
   (N_2 N_3)^{\delta}
  \Big(\frac{N_2 N_3}{N_0 N_1}\Big)^{\frac{1}{2}-\delta}
  N_{0}^{s}N_1^{-s}(N_2 N_3)^{-\varepsilon}\\
\!\!\!\!\!\!\!\!\!\!\!\!\!\!\!\!\!\!\!\!\!\!\!\!\!\!\!\!\!\!\!\!\!\!\!\!
    \times 
  \|\Delta_{N_0} (u_0)\|_{X^{-s,b'}_T} \|\Delta_{N_1} (u_1)\|_{X^{s,b'}_T}
  \prod_{j=3}^4 \|\Delta_{N_j} (u_j)\|_{X^{\epsilon,b'}_T}
  \\
  \leq C \Big (\sum_{N}  \|\Delta_{N} (u_0)\|_{X^{-s ,b'}_T}^2  \Big )^{1/2}
  \Big (\sum_{N}  \|\Delta_{N} (u_1)\|_{X^{s ,b'}_T}^2  \Big )^{1/2}
  \prod_{j=3}^4 \Big (\sum_{N}  \|\Delta_{N} (u_j)\|_{X^{\epsilon ,b'}_T}^2  \Big )^{1/2}\,.
   \end{multline*}
In the range of summation,
\begin{equation*}
 (N_2 N_3)^{\delta}
  \Big(\frac{N_2 N_3}{N_0 N_1}\Big)^{\frac{1}{2}-\delta}
  N_{0}^{s}N_1^{-s}(N_2 N_3)^{-\varepsilon}
 = N_{0}^{s-\frac 12+\delta}
  (N_2 N_3)^{\frac 12-\varepsilon} N_{1}^{-s-\frac 12+\delta}
\leq N_1^{-\kappa}
\end{equation*} where at the last step we have chosen $\delta>0$ small enough enough in such a way that
$\kappa>0$, allowing us to sum over  $N_0$, $N_1$, $N_2$, $N_3$.  This completes the proof of Proposition~\ref{poiret}. 
\end{proof}
As a standard consequence of Proposition~\ref{poiret} (see e.g. \cite[Proposition~3.3]{BGT_ens}), we can obtain the following well-posedness result.
\begin{prop}\label{Cauchy}
Let $R>0$ and $s_0\geq 1$ be given. Then there exists $T>0$ and $b>\frac 12$ such that 
\eqref{harmosc} has a unique local solution in $X^{s_0, b}_T$  for every $\varphi\in {\mathcal H}^{s_0}$
with $\|\varphi \|_{{\mathcal H}^{1}}<R$.
Moreover,
\begin{equation}\label{szero}
\|u(t,x)\|_{X^{s_0, b}_T}\leq 2 \|\varphi\|_{{\mathcal H}^{s_0}}\,.
\end{equation}
\end{prop}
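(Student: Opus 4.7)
The plan is a standard Picard iteration in localized Bourgain spaces, where the tame structure of Proposition~\ref{poiret} is precisely what makes the decoupling between the contraction time (which must depend only on $R$) and the propagation of higher regularity work. Choose $b \in (1/2, 1)$ as given by Proposition~\ref{poiret}, so that in particular we have the embedding $X^{s,b}_T \hookrightarrow \mathcal{C}([-T,T]; \mathcal{H}^s)$. The standard linear estimates (obtained by inserting a smooth time cutoff and using the definition of the $X^{s,b}$ norm) yield $\|e^{itA}\varphi\|_{X^{s,b}_T} \leq C_0 \|\varphi\|_{\mathcal{H}^s}$ uniformly for $T\in(0,1)$ and any $s \geq 0$. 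Define the Duhamel map
\[
\Phi(u)(t) = e^{itA}\varphi \mp \im \int_0^t e^{i(t-\tau)A}\bigl(|u|^2 u\bigr)(\tau)\,d\tau.
\]

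First I would close the contraction at the $\mathcal{H}^1$ level. Fix $\varepsilon \in (0,1)$ so that $X^{1,b}_T \hookrightarrow X^{\varepsilon, b}_T$. Applying Proposition~\ref{poiret} with $s=1$ yields
\[
\|\Phi(u)\|_{X^{1,b}_T} \leq C_0 R + C T^\gamma \|u\|_{X^{1,b}_T}^3,
\]
together with a corresponding difference estimate for $\Phi(u) - \Phi(v)$. Choosing $T = T(R) > 0$ small enough to enforce $C T^\gamma (2 C_0 R)^2 \leq 1/2$, the map $\Phi$ becomes a contraction on the closed ball of radius $2 C_0 R$ in $X^{1,b}_T$, producing a unique fixed point $u$ with $\|u\|_{X^{1,b}_T} \leq 2 C_0 R$.

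Next I would propagate regularity to $s_0 \geq 1$. Applying Proposition~\ref{poiret} to the fixed point $u$ at level $s=s_0$ (permitted since $s_0 \geq 1 \geq \varepsilon$) gives
\[
\|u\|_{X^{s_0,b}_T} \leq C_0 \|\varphi\|_{\mathcal{H}^{s_0}} + C T^\gamma \|u\|_{X^{s_0,b}_T} \|u\|_{X^{\varepsilon,b}_T}^2.
\]
Since $\|u\|_{X^{\varepsilon,b}_T} \leq \|u\|_{X^{1,b}_T} \leq 2 C_0 R$, the smallness condition already chosen on $T$ gives $C T^\gamma \|u\|_{X^{\varepsilon,b}_T}^2 \leq 1/2$, so the $X^{s_0,b}_T$ norm of $u$ can be absorbed on the left, yielding \eqref{szero}. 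A priori the fixed point is only known to live in $X^{1,b}_T$, but one justifies its membership in $X^{s_0,b}_T$ by running the same contraction argument in $X^{s_0,b}_T$ on an even shorter time interval and then using the $\mathcal{H}^{s_0}$ persistence (via the embedding $X^{s_0,b}_T \hookrightarrow \mathcal{C}([-T,T];\mathcal{H}^{s_0})$) together with uniqueness in $X^{1,b}_T$ to identify the two solutions.

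The only real point to check is that the contraction radius and the existence time depend only on $R = \|\varphi\|_{\mathcal{H}^1}$, not on the larger quantity $\|\varphi\|_{\mathcal{H}^{s_0}}$: this is exactly the content of the tame estimate in Proposition~\ref{poiret}, where two of the three factors on the right-hand side are measured at regularity $\varepsilon < 1$. Thus there is no genuine obstacle; the argument is entirely standard once the tame trilinear bound is in hand.
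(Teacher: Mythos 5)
Your argument is correct and is precisely the standard fixed-point scheme the paper invokes by citing \cite[Proposition~3.3]{BGT_ens}: a contraction in $X^{1,b}_T$ on a time depending only on $R$, followed by propagation of $\mathcal{H}^{s_0}$ regularity through the tame structure of Proposition~\ref{poiret}, with the higher norm absorbed on the left. The only cosmetic discrepancy is that the absorption step yields $2C_0\|\varphi\|_{\mathcal{H}^{s_0}}$ with $C_0$ the constant from the linear estimate rather than the literal factor $2$ appearing in \eqref{szero}, which is harmless for all subsequent uses.
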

\begin{remark}
While the Proposition is stated above regularity ${\mathcal H}^1$, it can be extended at lower regularity ${\mathcal H}^\varepsilon$
with $\varepsilon>0$, however we do not need such a low regularity later on.%
\end{remark}

Our next proposition reduces studying the growth of
the ${\mathcal H}^{2k}$ norm of the solution $u(t,x)$ to the analysis of 
the growth of $\|\partial_t^{k} u(t,x)\|_{L^2}$.
In fact this last quantity is easier to handle, as $\partial_{t}$ has better commutation properties with the nonlinear Schr\"odinger flow than the operator $A$.
\begin{prop}\label{equibalence}
Let $k,s\in \N$  and $R, \delta>0$ be given. Let $T>0$ be associated with $R$  and $s_0=2k+s$ as in Proposition \ref{Cauchy} and let
 $u(t,x)\in X^{2k+s, b}_T$ be the unique local solution to \eqref{harmosc} with initial condition $\varphi\in {\mathcal H}^{2k+s}$. 
Assume moreover that $\sup_{t\in (-T, T)} \|u(t,x)\|_{{\mathcal H}^{1}}<R$.
Then there exists $C>0$ such that
\begin{align}\label{eqENimproved2d}
\forall t\in (-T, T)\,,\quad\| \partial_t^k u(t)- i^k A^k u(t)\|_{{\mathcal H}^s}
\leq C\|u(t)\|_{{\mathcal H}^{s+2k-1}}^{1+\delta}\,.
\end{align}
\end{prop}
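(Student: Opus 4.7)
My approach is induction on $k\geq 1$. Rewriting the equation as $\partial_t u = iAu + N(u)$ with $N(u)=\mp i|u|^2 u$, the base case $k=1$ amounts to the cubic tame estimate
\[
 \|N(u)\|_{{\mathcal H}^s} \leq C(R)\,\|u\|_{{\mathcal H}^{s+1}}^{1+\delta}.
\]
Using the equivalence \eqref{equivalence}, a Moser-type fractional Leibniz inequality on $H^s$, and the weight estimate $\|\langle x\rangle^s|u|^2 u\|_{L^2}\leq \|\langle x\rangle^s u\|_{L^2}\|u\|_{L^\infty}^2$, I reduce to controlling $\|u\|_{{\mathcal H}^s}\|u\|_{L^\infty}^2$. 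The two-dimensional Sobolev embedding gives $\|u\|_{L^\infty}\lesssim \|u\|_{{\mathcal H}^{1+\epsilon}}$, and interpolating both ${\mathcal H}^s$ and ${\mathcal H}^{1+\epsilon}$ between ${\mathcal H}^1$ (bounded by $R$) and ${\mathcal H}^{s+1}$ produces an exponent on ${\mathcal H}^{s+1}$ strictly less than $1$ for $\epsilon$ small, the complementary $\|u\|_{{\mathcal H}^1}$-powers being absorbed into~$R$. A split on $\|u\|_{{\mathcal H}^{s+1}}\gtrless 1$—using the raw cubic bound $\|u\|_{{\mathcal H}^{s+1}}^3$ in the small-norm regime—upgrades this exponent to $1+\delta$ uniformly; the edge case $s=0$ is even more direct, via $\|N(u)\|_{L^2}\lesssim \|u\|_{L^6}^3\lesssim\|u\|_{{\mathcal H}^1}^3\leq R^{2-\delta}\|u\|_{{\mathcal H}^1}^{1+\delta}$.

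For the inductive step, commuting $\partial_t$ with the equation yields
\[
 \partial_t^k u - i^k A^k u = iA\,E_{k-1} + \partial_t^{k-1} N(u),\qquad E_{k-1}:=\partial_t^{k-1}u - i^{k-1}A^{k-1}u.
\]
After pulling $A$ into the ${\mathcal H}^s$ norm (a shift by~$2$), the first summand equals $\|E_{k-1}\|_{{\mathcal H}^{s+2}}$, which the induction hypothesis at regularity $s+2$ bounds by $\|u\|_{{\mathcal H}^{(s+2)+2(k-1)-1}}^{1+\delta}=\|u\|_{{\mathcal H}^{s+2k-1}}^{1+\delta}$. For the second summand, a Leibniz expansion of $\partial_t^{k-1}(u u\bar u)$ produces a finite sum of trilinears $\partial_t^{j_1}u\cdot\partial_t^{j_2}u\cdot\partial_t^{j_3}\bar u$ with $j_1+j_2+j_3=k-1$. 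On each such product I apply the trilinear Leibniz estimate
\[
 \|fgh\|_{{\mathcal H}^s}\lesssim \|f\|_{{\mathcal H}^s}\|g\|_{L^\infty}\|h\|_{L^\infty} + (\text{permutations}),
\]
placing the largest-$j$ factor in ${\mathcal H}^s$ and the other two in $L^\infty\hookleftarrow{\mathcal H}^{1+\epsilon}$, and invoke the induction hypothesis to replace every $\|\partial_t^{j}u\|_{{\mathcal H}^m}$ by $\|u\|_{{\mathcal H}^{m+2j}}$ (the subleading IH remainders are already in the admissible form).

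The crux is an interpolation bookkeeping. Setting $M=s+2k-1$, each factor $\|u\|_{{\mathcal H}^{m_\ell}}$ interpolates between ${\mathcal H}^1$ and ${\mathcal H}^M$ with exponent $(m_\ell-1)/(M-1)$ on $\|u\|_{{\mathcal H}^M}$; the Leibniz constraint $j_1+j_2+j_3=k-1$ forces the total exponent to equal $(s+2k-3+2\epsilon)/(s+2k-2)$, strictly less than~$1$ for small $\epsilon$. The complementary ${\mathcal H}^1$-powers are controlled by $R$, and the same regime dichotomy $\|u\|_{{\mathcal H}^M}\gtrless 1$ as in the base case converts the output into the advertised $C(R)\|u\|_{{\mathcal H}^M}^{1+\delta}$. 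The principal obstacle is precisely this tame accounting: one must save exactly one spatial derivative on the top norm—going from the naive ${\mathcal H}^{s+2k}$ bound down to ${\mathcal H}^{s+2k-1}$—and the saving is bought by always routing one of the three cubic factors into the a priori-controlled ${\mathcal H}^1$ reserve.
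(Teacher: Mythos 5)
Your argument is correct and follows essentially the same route as the paper: induction on $k$, reduction to estimating $\partial_t^{j}(u|u|^2)$ in ${\mathcal H}^m$ via the equivalence \eqref{equivalence}, tame Leibniz/Sobolev product estimates with the weight placed on a single factor, and interpolation between ${\mathcal H}^1$ and the top norm using the a priori ${\mathcal H}^1$ bound. The only differences are organizational: you write the commutator recursively as $iAE_{k-1}+\partial_t^{k-1}N(u)$ where the paper unrolls it into the explicit sum \eqref{basicequiv}, and you reach the exponent $1+\delta$ by a dichotomy on $\|u\|_{{\mathcal H}^{s+2k-1}}\gtrless 1$ where the paper's bookkeeping lands on $1+\delta$ directly from the $(1+\delta)$-powers carried by the induction hypothesis.
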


\begin{proof} We temporarily drop dependence on $t$ since the estimates we prove are pointwise in time.
In the sequel we shall also use without further comment that
$\sup_{t\in (-T,T)} \|u(t)\|_{{\mathcal H}^1}<R$. We shall denote by $\delta>0$ an arbitrary small number that can change from line to line. We start from the identity
\begin{equation}\label{basicequiv}
\partial_t^h u= i^h A^h u + \sum_{j=0}^{h-1} c_j \partial_t^j A^{h-j-1} (u|u|^2)\,,
\end{equation}
available for every integer $h\geq 1$ and for
suitable coefficients $c_j\in \C$. Its elementary
proof follows by induction on $h$, using the equation
solved by $u(t,x)$.

Next we argue by induction on $k$ in order to establish \eqref{eqENimproved2d}.
More precisely by assuming \eqref{eqENimproved2d} we shall prove that the same estimate is true if we replace $k$ by $k+1$.
Indeed by \eqref{basicequiv}, where we choose $h=k+1$, the estimate \eqref{eqENimproved2d} for $k+1$ reduces to
\begin{equation}\label{nosal}\|\partial_t^j (u|u|^{2})\|_{{\mathcal H}^{2k-2j+s}} \leq C
\|u\|_{{\mathcal H}^{s+2k+1}}^{1+\delta}, \quad j=0,\dots ,k.\end{equation}
Hence we prove \eqref{nosal},  assuming \eqref{eqENimproved2d}. Recalling \eqref{equivalence},
we have to prove
\begin{align}\label{nosal1}\|D^{2k-2j+s}\partial_t^j (u|u|^{2})\|_{L^2}\leq & C
\|u\|_{{\mathcal H}^{s+2k+1}}^{1+\delta} , \quad j=0,\dots ,k\,,\\
\label{nosal2}\|\langle x \rangle^{2k-2j+s} \partial_t^j (u|u|^{2})\|_{L^2}\leq  & C
\|u\|_{{\mathcal H}^{s+2k+1}}^{1+\delta}, \quad j=0,\dots ,k\,.\end{align}
To prove \eqref{nosal1}
we expand time and space derivatives on the left-hand side. Since $s$ is an integer and we never work with $L^1$ and $L^\infty$ norms, we may replace the operator $D$ 
by the usual gradient operator $\nabla$, and in particular, use the Leibniz rule. Hence by expanding space-time derivatives and by using H\"older,
we can estimate as follows the l.h.s. in \eqref{nosal1}:
\begin{align*}
\sum_{\substack{j_1+j_2+j_3=j\\s_1+s_2+s_3=2k-2j+s}} \prod_{l=1,2,3} \|\partial_t^{j_l} u\|_{W^{s_l, 6}}
& \leq  C \sum_{\substack{j_1+j_2+j_3=j\\s_1+s_2+s_3=2k-2j+s}}
\prod_{l=1,2,3} \|\partial_t^{j_l} u\|_{{\mathcal H}^{s_l+1}}\\\nonumber& \leq C
\sum_{\substack{j_1+j_2+j_3=j\\s_1+s_2+s_3=2k-2j+s}} \prod_{l=1,2,3} \|u
\|_{{\mathcal H}^{2j_l+s_l+1}}^{1+\delta}
\end{align*}
where we used the Sobolev embedding 
and the induction hypothesis at the last step.
We proceed with a trivial interpolation argument,
$$(\dots) \leq C  \|u\|_{{\mathcal H}^{s+2k+1}}^{\delta}
\Big( \prod_{l=1,2,3}  \|u\|_{{\mathcal H}^{s+2k+1}}^{\theta_l}\|u\|_{{\mathcal H}^{1}}^{(1-\theta_l)}
\Big),$$
where $\theta_l(s+2k+1)+ (1-\theta_l)= 2j_l+s_l+1$.
We conclude to \eqref{nosal1} since by direct computation we have $\sum_{l=1}^3 \theta_l=1$ for every $j=0, \dots ,k$.

We now turn to \eqref{nosal2}: by Leibniz rule and  H\"older, we estimate the l.h.s.
$$
\sum_{\substack{j_1+j_2+j_3=j\\ j_2, j_3<j}} \|\langle x \rangle^{2k-2j+s}\partial_t^{j_1} u\|_{L^{2}} \|\partial_t^{j_2} u\|_{L^\infty} 
\|\partial_t^{j_3} u\|_{L^\infty}
\leq C \sum_{\substack{j_1+j_2+j_3=j\\ j_2, j_3<j}} \|\partial_t^{j_1} u\|_{{\mathcal H}^{2k-2j+s}} 
\|\partial_t^{j_2} u\|_{{\mathcal H}^{1+\delta}} 
\|\partial_t^{j_3} u\|_{{\mathcal H}^{1+\delta}}
$$
where we used Sobolev embedding. By interpolation we proceed with
\begin{align*}
(\dots)  & \leq C \sum_{\substack{j_1+j_2+j_3=j\\ j_2, j_3<j}} \|\partial_t^{j_1} u\|_{{\mathcal H}^{2k-2j+s}} 
\|\partial_t^{j_2} u\|_{{\mathcal H}^1}^{1-\delta}  \|\partial_t^{j_2} u\|_{{\mathcal H}^2}^\delta  
\|\partial_t^{j_3} u\|_{{\mathcal H}^1}^{1-\delta}  \|\partial_t^{j_3} u\|_{{\mathcal H}^2}^\delta \\
 & \leq C \sum_{\substack{j_1+j_2+j_3=j\\ j_2, j_3<j}} \|u\|_{{\mathcal H}^{2j_1+2k-2j+s}}^{1+\delta} \|u\|_{{\mathcal H}^{2j_2+1}}
\|u\|_{{\mathcal H}^{2j_2+2}}^\delta
\|u\|_{{\mathcal H}^{2j_3+1}}\|u\|_{{\mathcal H}^{2j_3+2}}^\delta
\end{align*}
where we used the inductive assumption \eqref{eqENimproved2d}  to estimate $\|\partial_t^{j_l} u\|_{{\mathcal H}^s}$ for $l=2,3$ and $s=1,2$.
By a further interpolation step and using again \eqref{eqENimproved2d} we get
$$
(\dots) \leq C \|u\|_{{\mathcal H}^{2k+s}}^\delta
\Big( \prod_{l=1,2,3}  \|u\|_{{\mathcal H}^{s+2k+1}}^{\theta_l}\|u\|_{{\mathcal H}^{1}}^{(1-\theta_l)}
\Big)\,,
$$
where we have chosen
$$
\theta_1(s+2k+1)+ (1-\theta_1)= 2j_1+2k-2j+s\,, \,\,\,\,
\theta_l(s+2k+1)+ (1-\theta_l)= 2j_l+1, \quad l=2,3\,.
$$
We conclude to \eqref{nosal2} since one can check $\sum_{l=1}^3 \theta_l< 1$ for $j=0,\dots ,k$.
\end{proof}
The next proposition will be crucial in the sequel. It allows to estimate
the norm of time derivatives of the solution in the localized $X^{s,b}_T$ spaces,
by using suitable Sobolev norms of the initial datum.
\begin{prop}\label{cauchyderivative}

Let $l \in \N$, $R, \delta>0$ and $s\in (0,2]$ be given. Let $T>0$ be associated with $R$  and $s_0=2l+2$ as in Proposition \ref{Cauchy},
and $u(t,x)\in X^{2l+2, b}_T$ be the unique local solution to \eqref{harmosc} with initial condition $\varphi\in {\mathcal H}^{2l+2}$ and $\|\varphi \|_{{\mathcal H}^{1}}<R$.
Assume moreover that $\sup_{t\in (-T, T)} \|u(t,x)\|_{{\mathcal H}^{1}}<R$, then there exists $C>0$ such that:
\begin{equation}\label{012}
\|\partial_t^l u\|_{X^{s, b}_T}\leq C \|\varphi\|_{{\mathcal H}^{2l}}^{1-s}
\|\varphi\|_{{\mathcal H}^{2l+1}}^{s} \|\varphi\|_{{\mathcal H}^{2l+2}}^{\delta},\quad \hbox{ if }
s\in (0,1]\end{equation}
and
\begin{equation}\label{12}
\|\partial_t^l u\|_{X^{s, b}_T}\leq C \|\varphi\|_{{\mathcal H}^{2l+1}}^{2-s}
\|\varphi\|_{{\mathcal H}^{2l+2}}^{s-1+\delta} ,  \quad  \hbox{ if }
s\in (1,2].\end{equation}
\end{prop}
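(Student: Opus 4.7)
The plan is to decompose $\partial_t^l u=i^l A^l u+R_l$ via identity~\eqref{basicequiv}, bound the main term directly by Proposition~\ref{Cauchy}, and handle the remainder by induction on $l$ using Duhamel combined with Proposition~\ref{poiret}; the full range $s\in(0,2]$ is then recovered by complex interpolation in the first index of $X^{s,b}_T$.

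For each fixed $b$, the norm $\|f\|_{X^{s,b}_T}^2=\sum_n(2n+2)^s\|\langle\tau+2n+2\rangle^b\mathcal{F}_t\Pi_n f\|^2_{L^2}$ is a weighted $\ell^2$-sum, so H\"older yields the log-convexity $\|f\|_{X^{s,b}_T}\leq\|f\|_{X^{s_0,b}_T}^{1-\theta}\|f\|_{X^{s_1,b}_T}^{\theta}$ whenever $s=(1-\theta)s_0+\theta s_1$. Interpolating between adjacent integer endpoints reproduces the product structure of \eqref{012} and \eqref{12}, so it suffices to prove the three bounds at $s\in\{0,1,2\}$ with respective target factors $\|\varphi\|_{{\mathcal H}^{2l}}\|\varphi\|_{{\mathcal H}^{2l+2}}^{\delta}$, $\|\varphi\|_{{\mathcal H}^{2l+1}}\|\varphi\|_{{\mathcal H}^{2l+2}}^{\delta}$, and $\|\varphi\|_{{\mathcal H}^{2l+2}}^{1+\delta}$. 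Since $A$ is diagonal in the eigenspace decomposition defining $X^{s,b}_T$, one has $\|A^l u\|_{X^{s,b}_T}=\|u\|_{X^{s+2l,b}_T}\leq 2\|\varphi\|_{{\mathcal H}^{s+2l}}$ by Proposition~\ref{Cauchy}, which produces exactly the leading Sobolev factor at each endpoint.

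For the remainder $R_l$ I argue by induction on $l$ via the Duhamel identity
\begin{equation*}
\partial_t^l u(t)=e^{itA}\partial_t^l u(0)\pm i\int_0^t e^{i(t-\tau)A}\partial_t^l(u|u|^2)(\tau)\,d\tau,
\end{equation*}
expanding $\partial_t^l(u|u|^2)$ by Leibniz into a sum of trilinear products $\partial_t^{j_1}u\cdot\partial_t^{j_2}u\cdot\overline{\partial_t^{j_3}u}$ with $j_1+j_2+j_3=l$ and applying Proposition~\ref{poiret}. The initial data $\|\partial_t^l u(0)\|_{{\mathcal H}^s}$ is controlled by Proposition~\ref{equibalence}, yielding $\|\varphi\|_{{\mathcal H}^{s+2l}}+C\|\varphi\|_{{\mathcal H}^{s+2l-1}}^{1+\delta}$; the second piece fits inside the target via $\|\varphi\|_{{\mathcal H}^{s+2l-1}}\leq\|\varphi\|_{{\mathcal H}^{2l+1}}\leq\|\varphi\|_{{\mathcal H}^{2l+2}}$ together with the $\|\varphi\|_{{\mathcal H}^{2l+2}}^{\delta}$ slack. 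In the trilinear sum, tuples with all $j_i<l$ are handled by the inductive hypothesis; the remaining tuples of type $(l,0,0)$ contain $\partial_t^l u$ itself. When $\partial_t^l u$ sits in the high-regularity slot the companion factors are $\|u\|_{X^{\varepsilon,b}_T}\lesssim R$, so the contribution $CT^\gamma R^2\|\partial_t^l u\|_{X^{s,b}_T}$ absorbs into the LHS for $T$ small in $R$; when it sits in a low slot, $\|\partial_t^l u\|_{X^{\varepsilon,b}_T}$ on the right is controlled by establishing the endpoints in the order $s\to 0^+$, $s=1$, $s=2$, so that the smaller-$s$ bound is available to feed into the larger one.

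The main technical obstacle is closing this absorption in the low-slot case at $s=2$: here the companion factor $\|u\|_{X^{2,b}_T}\lesssim\|\varphi\|_{{\mathcal H}^2}$ is no longer bounded by $R$, so one must carefully distribute the Sobolev exponents and exploit the $\|\varphi\|_{{\mathcal H}^{2l+2}}^{\delta}$ margin (playing the role of a Br\'ezis--Gallou\"et-type slack) in order to match the target scaling $\|\varphi\|_{{\mathcal H}^{2l+2}}^{1+\delta}$.
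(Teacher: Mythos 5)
Your proposal follows the same core route as the paper's proof: the Duhamel formula for $\partial_t^l u$, the tame trilinear estimate of Proposition \ref{poiret}, induction on $l$, absorption of the $(l,0,0)$-type terms for $T$ small, control of the data term $\|\partial_t^l u(0)\|_{{\mathcal H}^s}$ via Proposition \ref{equibalence}, and interpolation of Sobolev norms of $\varphi$ against the a priori ${\mathcal H}^1$ bound. The one organizational difference is that you prove only the endpoints $s\in\{\varepsilon,1,2\}$ and recover intermediate $s$ by log-convexity of $s\mapsto\|\cdot\|_{X^{s,b}_T}$, whereas the paper proves the estimate for each $s$ directly (for $s\in(1,2]$ it places the low-regularity factors in $X^{2-s,b}_T$ rather than $X^{\varepsilon,b}_T$). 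Your reduction is arguably cleaner, but the H\"older argument you give for log-convexity applies to the unrestricted $X^{s,b}$ norm; for the restriction norm, defined as an infimum over extensions, the near-optimal extensions at the two endpoint regularities need not coincide, so you must either invoke complex interpolation of the quotient spaces or run the whole argument with a single fixed extension. Also note that your announced decomposition $\partial_t^l u=i^lA^lu+R_l$ is never actually used: the argument you carry out is the Duhamel one for $\partial_t^l u$ itself.

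Two further points need to be filled in. First, the sentence ``tuples with all $j_i<l$ are handled by the inductive hypothesis'' is where essentially all of the work of the paper's proof lies: multiplying three inductive bounds produces $\prod_{i}\|\varphi\|_{{\mathcal H}^{2j_i}}^{1-s}\|\varphi\|_{{\mathcal H}^{2j_i+1}}^{s}$ (up to $\delta$-factors), and one must interpolate each factor between ${\mathcal H}^1$ and ${\mathcal H}^{2l}$, ${\mathcal H}^{2l+1}$ or ${\mathcal H}^{2l+2}$ and check that the resulting exponents sum to at most $1$, treating separately the degenerate case $j_i=0$ where the interpolation parameter would be negative and one instead uses $\|\varphi\|_{L^2}\leq R$. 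This arithmetic does close at your endpoints, but it is the actual content of the proposition and cannot simply be asserted. Second, the absorption step only yields the estimate on a smaller time $\bar T$ depending on $R$; since the constants depend only on $R$, the bound must then be iterated finitely many times to reach the $T$ of Proposition \ref{Cauchy}, as the paper points out. With these points made explicit, your argument is correct.
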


\begin{proof}
We shall prove separately \eqref{012} and \eqref{12} by induction on $l$. 
\\
\\
{\em Proof of \eqref{012}}
\\
\\
We consider the integral formulation of the equation solved by $\partial_t^l u$,
$$
\partial_t^l u(t)= e^{itA} \partial_t^l u(0) + \int_{0}^t e^{i(t-\tau)A} \partial_\tau^l (u(\tau)|u(\tau)|^2)
d\tau
$$
and then by standard properties of the $X^{s,b}$ spaces,
\begin{equation}\label{bouport}\|\partial_t^l u\|_{X^{s, b}_T}
\\\leq C\Big ( \|\partial_t^l u(0)\|_{{\mathcal H}^{s}} + \Big \|\int_{0}^t e^{i(t-\tau)A}
\partial_\tau^l (u(\tau)|u(\tau)|^2) d\tau\Big \|_{X^{s, b}_T}\Big )\,.
\end{equation}
Expanding the time derivative and using Proposition \ref{poiret} we get
\begin{equation}\label{bouport1new}
\|\partial_t^l u(t)\|_{X^{s, b}_T}
\leq C \Big (\|\partial_t^l u(0)\|_{{\mathcal H}^{s}} 
+T^\gamma \sum_{l_1+l_2+l_3=l} \|\partial_t^{l_1} u\|_{X^{s, b}_T}
 \|\partial_t^{l_2} u\|_{X^{s, b}_T}
\|\partial_t^{l_3} u \|_{X^{s, b}_T}\Big )\,.
\end{equation} 
By interpolation and Proposition \ref{equibalence} we also have 
\begin{equation}\label{datumcau}
\|\partial_t^l u(0)\|_{{\mathcal H}^{s}} \leq  \|\partial_t^l u(0)\|_{{\mathcal H}^{1}}^s\|\partial_t^l u(0)\|_{L^2}^{1-s}
\leq C \|u(0)\|_{{\mathcal H}^{2l+1}}^s \|u(0)\|_{{\mathcal H}^{2l}}^{1-s} 
\|u(0)\|_{{\mathcal H}^{2l+2}}^{\delta}= C\|\varphi\|_{{\mathcal H}^{2l+1}}^s \|\varphi\|_{{\mathcal H}^{2l}}^{1-s} 
\|\varphi\|_{{\mathcal H}^{2l+2}}^{\delta}\,.
\end{equation}
Therefore, estimating the second term on the r.h.s. in \eqref{bouport1new} is sufficient. We split the proof in two cases.\\
\\
{\em First case: $0<\min\{l_1, l_2, l_3\}\leq \max\{l_1, l_2, l_3\}<l$}
\\
\\
We use induction
on $l$ and estimate
\begin{multline*}\sum_{\substack{l_1+l_2+l_3=l\\
\max\{l_1, l_2, l_3\}<l}} \|\partial_t^{l_1} u\|_{X^{s, b}_T}
 \|\partial_t^{l_2} u\|_{X^{s, b}_T}
\|\partial_t^{l_3} u \|_{X^{s, b}_T}\\
\leq C\sum_{\substack{l_1+l_2+l_3=l\\
\max\{l_1, l_2, l_3\}<l}} \|\varphi\|_{{\mathcal H}^{2l_1}}^{1-s}
 \|\varphi\|_{{\mathcal H}^{2l_2}}^{1-s}
\|\varphi \|_{{\mathcal H}^{2l_3}}^{1-s}
\|\varphi\|_{{\mathcal H}^{2l_1+1}}^{s}
 \|\varphi\|_{{\mathcal H}^{2l_2+1}}^{s}
\|\varphi\|_{{\mathcal H}^{2l_3+1}}^{s} \|\varphi \|_{{\mathcal H}^{2l+2}}^{\delta} \\
\leq C
\|\varphi\|_{{\mathcal H}^{2l}}^{(1-s){\eta_1}}
\|\varphi\|_{{\mathcal H}^{2l}}^{(1-s)\eta_2}\|\varphi\|_{{\mathcal H}^{2l}}^{(1-s)\eta_3}
\|\varphi\|_{{\mathcal H}^{1}}^{(1-s)(1-\eta_1)}
\|\varphi\|_{{\mathcal H}^{1}}^{(1-s)(1-\eta_2)}\|\varphi\|_{{\mathcal H}^{1}}^{(1-s)(1-\eta_3)}\\
\times
\|\varphi\|_{{\mathcal H}^{2l+1}}^{s\theta_1}
\|\varphi\|_{{\mathcal H}^{2l+1}}^{s\theta_2}\|\varphi\|_{{\mathcal H}^{2l+1}}^{s\theta_3}
\|\varphi\|_{{\mathcal H}^{1}}^{s(1-\theta_1)}
\|\varphi\|_{{\mathcal H}^{1}}^{s(1-\theta_2)}\|u(t_0)\|_{{\mathcal H}^{1}}^{s(1-\theta_3)}
\|\varphi\|_{{\mathcal H}^{2l+2}}^{\delta} 
\end{multline*}
where
\begin{equation*}\begin{cases}
2l \eta_1 + 1-\eta_1= 2l_1\\
2l \eta_2 + 1-\eta_2= 2l_2\\
2l \eta_3 + 1-\eta_3= 2l_3
\end{cases}
 \,\,\,\,\,\text{and}\,\,\,\,\,\begin{cases}\theta_1(2l+1) + 1-\theta_1= 2l_1+1\\
\theta_2(2l+1) + 1-\theta_2= 2l_2+1\\
\theta_3(2l+1) + 1-\theta_3= 2l_3+1
\end{cases}\,.
\end{equation*}
Since $\theta_1+\theta_2+\theta_3=1$ and $\eta_1+\eta_2+\eta_3<1$ we may conclude with
\begin{equation}\label{l1l2l3diversi}\sum_{\substack{l_1+l_2+l_3=l\\
\max\{l_1, l_2, l_3\}<l}} \|\partial_t^{l_1} u\|_{X^{s, b}_T}
 \|\partial_t^{l_2} u\|_{X^{s, b}_T}
\|\partial_t^{l_3} u \|_{X^{s, b}_T}
\leq C \|\varphi\|_{{\mathcal H}^{2l}}^{1-s}
\|u(t_0)\|_{{\mathcal H}^{2l+1}}^{s} \|\varphi\|_{{\mathcal H}^{2l+2}}^{\delta}.
\end{equation}
\\
\\
{\em Second case: $0=\min\{l_1, l_2, l_3\}\leq \max\{l_1, l_2, l_3\}<l$}
\\
\\
We can assume $l_1=0$. Then we argue exactly as above except that, since
$\|\varphi\|_{{\mathcal H}^{2l_1}}= \|\varphi\|_{L^2}$ is bounded since we assume a control on the ${\mathcal H}^1$ norm of the initial datum, it is not necessary to introduce the parameter $\eta_1$.
Hence we need only $\eta_2, \eta_3, \theta_1, \theta_2, \theta_3$. The conclusion is the same as above.
\\
\\
{\em Third case: $\max\{l_1, l_2, l_3\}=l$}
\\
\\ 
We estimate the terms in the sum 
at the r.h.s. of \eqref{bouport1new} as follows
\begin{equation*}\label{lucben3}
\|u \|_{X^{s, b}_T}^2 \|\partial_t^l u \|_{X^{s, b}_T}\leq \|u \|_{X^{1, b}_T}^2 \|\partial_t^l u \|_{X^{s, b}_T}
\leq C
\|\varphi\|_{{\mathcal H}^1}^2\|\partial_t^{l} u\|_{X^{s, b}_T}
\end{equation*}
where we have used \eqref{szero} for $s_0=1$.
\\

Summarizing \eqref{bouport1new}, \eqref{datumcau} and \eqref{l1l2l3diversi} we get
\begin{equation*}
\|\partial_t^l u(t)\|_{X^{s, b}_T}
\leq C\Big (
\|\varphi\|_{{\mathcal H}^{2l+1}}^s \|\varphi\|_{{\mathcal H}^{2l}}^{1-s} 
\|\varphi\|_{{\mathcal H}^{2l+2}}^{\delta}
+T^\gamma \|\partial_t^l u\|_{X^{s, b}_T}\Big )\,.
\end{equation*} 
We conclude by choosing a time $\bar T>0$ small enough
in such a way that the second term on the r.h.s. can be absorbed by the l.h.s. Notice that the bound that we get on the short time $\bar T$ can be iterated since the constants depends only from the ${\mathcal H}^1$
norm of the solution and hence we get the desired bound up to the fixed time $T$ provided by the proposition after a finite iteration of the previous argument.
\\
\\
{\em Proof of \eqref{12}}
\\
\\
By Proposition \eqref{poiret} we get
\begin{equation}\label{pilad}
\|\partial_t^l u(t)\|_{X^{s, b}_T}
\leq C\Big( \|\partial_t^l u(0)\|_{{\mathcal H}^{s}}
+T^\gamma \sum_{l_1+l_2+l_3=l} \|\partial_t^{l_1} u\|_{X^{s, b}_T}
 \|\partial_t^{l_2} u\|_{X^{2-s, b}_T}
\|\partial_t^{l_3} u \|_{X^{2-s, b}_T}\Big ).
\end{equation} 
We first notice that by interpolation and Proposition \ref{equibalence} (see the proof of \eqref{datumcau}),
\begin{equation}\label{pilad1}
\|\partial_t^l u(0)\|_{{\mathcal H}^{s}}\leq C
\|\varphi\|_{{\mathcal H}^{2l+1}}^{2-s}
\|\varphi\|_{{\mathcal H}^{2l+2}}^{s-1+\delta}\,.
\end{equation}
Next we estimate the sum on the r.h.s. of \eqref{pilad} by considering
two cases.
\\
\\
{\em First case: $0<\min\{l_1, l_2, l_3\}\leq \max\{l_1, l_2, l_3\}<l$}
\\
\\
By combining the inductive assumption on $l$ 
and \eqref{012} we get: 
\begin{multline*}\sum_{\substack{l_1+l_2+l_3=l\\
\max\{l_1, l_2, l_3\}<l}} \|\partial_t^{l_1} u\|_{X^{s, b}_T}
 \|\partial_t^{l_2} u\|_{X^{2-s, b}_T}
\|\partial_t^{l_3} u \|_{X^{2-s, b}_T}\\
\leq C \sum_{\substack{l_1+l_2+l_3=l\\
\max\{l_1, l_2, l_3\}<l}} \|\varphi\|_{{\mathcal H}^{2l_1+1}}^{2-s}
 \|\varphi\|_{{\mathcal H}^{2l_2}}^{s-1}
\|\varphi\|_{{\mathcal H}^{2l_3}}^{s-1}
\|\varphi\|_{{\mathcal H}^{2l_1+2}}^{s-1}
 \|\varphi\|_{{\mathcal H}^{2l_2+1}}^{2-s}
\|\varphi \|_{{\mathcal H}^{2l_3+1}}^{2-s} \|\varphi \|_{{\mathcal H}^{2l+2}}^{\delta} \\
\leq C
\|\varphi\|_{{\mathcal H}^{2l+1}}^{(2-s){\eta_1}}
\|\varphi\|_{{\mathcal H}^{2l+2}}^{(s-1)\eta_2}\|\varphi\|_{{\mathcal H}^{2l+2}}^{(s-1)\eta_3}
\|\varphi\|_{{\mathcal H}^{1}}^{(2-s)(1-\eta_1)}
\|\varphi\|_{{\mathcal H}^{1}}^{(s-1)(1-\eta_2)}\|\varphi\|_{{\mathcal H}^{1}}^{(s-1)(1-\eta_3)}\\
\times
\|\varphi\|_{{\mathcal H}^{2l+2}}^{(s-1)\theta_1}
\|\varphi\|_{{\mathcal H}^{2l+1}}^{(2-s)\theta_2}\|\varphi\|_{{\mathcal H}^{2l+1}}^{(2-s)\theta_3}
\|\varphi\|_{{\mathcal H}^{1}}^{(s-1)(1-\theta_1)}
\|\varphi\|_{{\mathcal H}^{1}}^{(2-s)(1-\theta_2)}\|\varphi\|_{{\mathcal H}^{1}}^{(2-s)(1-\theta_3)}
\|\varphi\|_{{\mathcal H}^{2l+2}}^{\delta} 
\end{multline*}
where:
\begin{equation*}\begin{cases}\eta_1(2l+1) + 1-\eta_1= 2l_1+1\\
\eta_2(2l+2) + 1-\eta_2= 2l_2\\
\eta_3(2l+2) + 1-\eta_3= 2l_3
\end{cases}
\,\,\,\,\,\text{and}\,\,\,\,\,\,
\begin{cases}\theta_1(2l+2) + 1-\theta_1= 2l_1+2\\
\theta_2(2l+1) + 1-\theta_2= 2l_2+1\\
\theta_3(2l+1) + 1-\theta_3= 2l_3+1
\end{cases}.
\end{equation*}
By direct computation $\eta_1+\theta_2+\theta_3=1$ and $\theta_1+\eta_2+\eta_3<1$. Hence the estimate above implies
\begin{equation}\label{pilad2}\sum_{\substack{l_1+l_2+l_3=l\\
\max\{l_1, l_2, l_3\}<l}} \|\partial_t^{l_1} u\|_{X^{s, b}_T}
 \|\partial_t^{l_2} u\|_{X^{2-s, b}_T}
\|\partial_t^{l_3} u \|_{X^{2-s, b}_T}
\leq C
\|\varphi\|_{{\mathcal H}^{2l+1}}^{2-s}
\|\varphi\|_{{\mathcal H}^{2l+2}}^{s-1+\delta}.
\end{equation}
{\em Second case: $0=\min\{l_1, l_2, l_3\}\leq \max\{l_1, l_2, l_3\}<l$}
\\
\\
If $l_1=0$, then our previous proof is valid since we have to deal with the norm
$\|\varphi\|_{{\mathcal H}^{2l_1+1}}$ and hence we have regularity ${\mathcal H}^1$ and the interpolation argument above can be applied.
However, in the cases $l_2=0$ or $l_3=0$ the proof needs to be slightly modified.
We can assume $l_2=0$ then in this case
$\|\varphi\|_{{\mathcal H}^{2l_2}}= \|\varphi\|_{L^2}$ is bounded since we assume a control on the ${\mathcal H}^1$ norm of the initial datum, hence it is not necessary to introduce the parameter $\eta_2$
in the interpolation step.
The conclusion is the same as above.
\\
\\
{\em Third case:
$\max\{l_1, l_2, l_3\}=l$}
\\
\\
We have to consider three cases
$(l_1, l_2, l_3)=(l,0,0)$, $(l_1, l_2, l_3)=(0,l,0)$ and $(l_1, l_2, l_3)=(0,0,l)$ (the last two cases are similar).
In the first case we have
\begin{equation}\label{lucben1}
\|\partial_t^{l} u\|_{X^{s, b}_T} \|u\|_{X^{2-s, b}_T}
\|u \|_{X^{2-s, b}_T} \leq C \|\partial_t^{l} u\|_{X^{s, b}_T}
\end{equation}
where we used the estimate 
\begin{equation}\label{energyestXsb}\|u\|_{X^{2-s, b}_T}\leq C\|\varphi\|_{{\mathcal H}^1},\end{equation}
which is a consequence of \eqref{szero} where we choose $s_0=1$. We conclude this case by choosing $T$ small enough,
exactly as we did along the proof of \eqref{012}.
Finally, for $(l_1, l_2, l_3)=(0,l,0)$ 
by Proposition \ref{Cauchy}, \eqref{energyestXsb} and \eqref{012},
\begin{multline}\label{lucben3}
\|u\|_{X^{s, b}_T} \|\partial_t^l u\|_{X^{2-s, b}_T}
\|u \|_{X^{2-s, b}_T}\\
\leq C \|\varphi\|_{{\mathcal H}^s}\|\partial_t^{l} u\|_{X^{2-s, b}_T}
\leq C \|\varphi\|_{{\mathcal H}^s} \|\varphi\|_{{\mathcal H}^{2l}}^{s-1}
\|\varphi\|_{{\mathcal H}^{2l+1}}^{2-s}  \|\varphi\|_{{\mathcal H}^{2l+2}}^{\delta} 
\\\leq C \|\varphi\|_{{\mathcal H}^{2l+2}}^{\frac{s-1}{2l+1}} \|\varphi\|_{{\mathcal H}^{2l+2}}^{\frac{(s-1)(2l-1)}{1+2l}}
\|\varphi\|_{{\mathcal H}^{2l+1}}^{2-s}  \|\varphi\|_{{\mathcal H}^{2l+2}}^{\delta}
\end{multline}
where we used interpolation and the a priori bound on the $\mathcal H^1$ norm of the initial datum.
This concludes the proof of \eqref{12}.
\end{proof}
\section{Modified energies and proof of Theorem \ref{main}}
The aim of this section is to introduce suitable energies and to measure how far they are
from being exact conservation laws. Those energies are the key tool in order to achieve the growth estimate
provided in Theorem \ref{main}. Along this section we denote by $\int$ the integral on $\R^2$ with respect to the Lebesgue measure $dx$, and
$\int \int$ the integral on $\R^2\times \R$ with respect to the Lebesgue measure $dxdt$.
\begin{prop}\label{modifen} Let $u(t,x)\in \mathcal C((-T,T); {\mathcal H}^{2k+2})$ 
be a local solution to \eqref{harmosc} with initial datum
$\varphi\in {\mathcal H}^{2k+2}$. Then
we have:
\begin{equation}\label{tobeint}\frac d{dt} \Big (\frac 12\|\partial_t^k A u(t,x)\|_{L^2}^2 + {\mathcal S}_{2k+2} (u(t,x))\Big )
={\mathcal R}_{2k+2}(u(t,x))\end{equation}
where ${\mathcal S}_{2k+2} (u(t, x))$ 
is a linear combination of terms of the following type:
\begin{equation*}\label{gradient1}
\int \partial_t^{k} L  u_0 \partial_t^{m_1} L u_1
\partial_t^{m_2} u_2 \partial_t^{m_3} u_3,\\
\quad m_1+m_2+m_3=k, \quad m_1<k.
\end{equation*}
and ${\mathcal R}_{2k+2}(u(t,x))$
is a linear combination of terms of the following type:
\begin{equation}\label{gradient2}
\int \partial_t^{k} L  u_0 \partial_t^{l_1} L u_1
\partial_t^{l_2} u_2 \partial_t^{l_3} u_3,
\quad l_1+l_2+l_3=k+1, \quad l_1\leq k.
\end{equation}
where in \eqref{gradient1} and \eqref{gradient2} we have $u_0, u_1, u_2, u_3\in \{u, \bar u \}$ and 
$L$ can be any of the following operators:
$$Lu=\partial_{x_i} u \hbox{ for }i=1,2, \quad Lu=\langle x \rangle u, \quad Lu=u.$$
\end{prop}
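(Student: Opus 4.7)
The plan is to differentiate $\frac{1}{2}\|\partial_t^k A u\|_{L^2}^2$ in $t$, use the equation to reduce the resulting integrand to a single four-factor integral with one $\partial_t^{k+1}$-factor, and then convert that extra time derivative into a total $\frac{d}{dt}$ (absorbed into $\mathcal{S}_{2k+2}$) plus a lower-order leftover (which is $\mathcal{R}_{2k+2}$), using the $t$-product rule together with integration by parts in $x$ coming from $A=-\Delta+\langle x\rangle^2-1$.

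Writing $w=\partial_t^k u$ and $f=\partial_t^k(u|u|^2)$ and using $\partial_t w=iAw\pm if$ from the equation, I first compute
$$
\frac{d}{dt}\,\tfrac{1}{2}\|Aw\|_{L^2}^2=\Re(A\partial_t w,Aw)=\Re\bigl(i(A^2w,Aw)\bigr)\pm\Re\bigl(i(Af,Aw)\bigr).
$$
Self-adjointness of $A$ makes $(A^2w,Aw)$ real, killing the first term. Invoking $Aw=-i\partial_t w\mp f$ in the second inner product and dropping the real contribution $(Af,f)$ yields
$$
\frac{d}{dt}\,\tfrac{1}{2}\|\partial_t^k Au\|_{L^2}^2=\mp\Re\int A\partial_t^k(u|u|^2)\,\overline{\partial_t^{k+1}u}\,dx.
$$

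Inserting $A=-\Delta+\langle x\rangle^2-1$ and integrating the $-\Delta$ piece by parts in $x$, the integral splits as
$$
\sum_{i=1}^{2}\int\partial_{x_i}\partial_t^k(u|u|^2)\,\overline{\partial_{x_i}\partial_t^{k+1}u}+\int\langle x\rangle\partial_t^k(u|u|^2)\,\overline{\langle x\rangle\partial_t^{k+1}u}-\int\partial_t^k(u|u|^2)\,\overline{\partial_t^{k+1}u},
$$
each summand having the shape $\int L\partial_t^k(u|u|^2)\cdot\overline{L\partial_t^{k+1}u}$ with $L\in\{\partial_{x_i},\langle x\rangle,\mathrm{id}\}$, which provides the two $L$-factors asked for in the statement. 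I then expand $L\partial_t^k(u|u|^2)$ by Leibniz in $\partial_t$ and (when $L=\partial_{x_i}$) in $\partial_{x_i}$, obtaining a linear combination of products
$$
g=\partial_t^{m_1}L_1u_{a_1}\cdot\partial_t^{m_2}L_2u_{a_2}\cdot\partial_t^{m_3}L_3u_{a_3},\qquad m_1+m_2+m_3=k,
$$
with $u_{a_j}\in\{u,\bar u\}$ and exactly one of the $L_j$'s equal to $L$ (none if $L=\mathrm{id}$).

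The decisive step is the $t$-product rule
$$
\int L\partial_t^{k+1}\bar u\cdot g\,dx=\frac{d}{dt}\int L\partial_t^k\bar u\cdot g\,dx-\int L\partial_t^k\bar u\cdot\partial_t g\,dx.
$$
Summing over $L$ and over the Leibniz terms, the total-derivative contributions are moved to the left-hand side of the master identity to form $\frac{d}{dt}\mathcal{S}_{2k+2}$: the first factor is $\partial_t^k L\bar u$, the second $L$-factor carries $m_1$ time derivatives, and the two remaining no-$L$ factors carry $m_2,m_3$ derivatives with $m_1+m_2+m_3=k$. The residual $\int L\partial_t^k\bar u\cdot\partial_t g$, after distributing $\partial_t$ across $g$ by Leibniz, has total time-derivative count $2k+1$ and matches the form $\int\partial_t^kLu_0\cdot\partial_t^{l_1}Lu_1\cdot\partial_t^{l_2}u_2\cdot\partial_t^{l_3}u_3$ with $l_1+l_2+l_3=k+1$ and $l_1\leq k$, which is $\mathcal{R}_{2k+2}$.

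The main obstacle is the combinatorial bookkeeping needed to match the prescribed derivative signatures in $\mathcal{S}_{2k+2}$ and $\mathcal{R}_{2k+2}$: in particular, whenever a single Leibniz term concentrates all $k$ time derivatives and the operator $L$ on the same interior factor of $g$, the naive assignment produces $m_1=k$ in the would-be $\mathcal{S}$-term (or $\partial_t^{k+1}L$ on a non-$u_0$ slot in the would-be $\mathcal{R}$-term), and one must then relabel that factor as the new $u_0$-slot, or apply the $t$-product rule once more, to bring the term back into the stated form. Once this relabeling is systematized the identity \eqref{tobeint} follows; no new analytic input is required beyond self-adjointness of $A$, integration by parts in $x$, and the product rule in $t$.
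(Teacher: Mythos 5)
Your proposal is correct in substance and follows essentially the same route as the paper: differentiate $\tfrac12\|\partial_t^kAu\|_{L^2}^2$, use the equation and self-adjointness of $A$ to reduce to $\mp\Re\int A\partial_t^k(u|u|^2)\,\overline{\partial_t^{k+1}u}$, split $A=-\Delta+\langle x\rangle^2-1$ and integrate by parts to create the two $L$-factors, then trade the $\partial_t^{k+1}$ for a total time derivative (giving $\mathcal S_{2k+2}$) plus a remainder (giving $\mathcal R_{2k+2}$). The one place where your bookkeeping fix is not quite the right mechanism is the diagonal Leibniz terms, where all $k$ time derivatives and the operator $L$ land on a single factor: there relabeling cannot bring $m_1$ below $k$ (both $L$-factors end up carrying $k$ time derivatives) and reapplying the product rule merely returns the conjugate of the same top-order term; the paper instead exploits the real-part structure, $2\Re\big(a\,\partial_t \bar a\big)=\partial_t|a|^2$ with $a=\partial_t^kL\bar u$ (and the analogous identity for the $u^2(\partial_t^kL\bar u)^2$ term), which places these contributions in $\mathcal S_{2k+2}$ as $\int|u|^2|\partial_t^kLu|^2$-type integrals (formally with $m_1=k$, a harmless deviation from the stated $m_1<k$ that does not affect any subsequent estimate) plus admissible remainders.
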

\begin{proof}
We have
$$i \partial_t (\partial_t^k\sqrt A u) + A (\partial_t^k \sqrt A u) \pm \partial_t^k
\sqrt A (u|u|^2)=0\,.$$
Next we multiply the equation above by $\partial_t^{k+1} \sqrt A \bar u$ and we take the real part,
$$\frac 12 \frac d{dt} (
\|\partial_t^k A u\|_{L^2}^2)= \mp \Re \int \partial_t^k \sqrt A ( u |u|^2) \partial_t^{k+1} \sqrt{A} \bar u.$$
By symmetry of the operator $\sqrt A$ we have
\begin{multline*}\Re \int \partial_t^k \sqrt A ( u |u|^2) \partial_t^{k+1} \sqrt{A} \bar u=\Re \int  
\partial_t^{k} (u |u|^2) \partial_t^{k+1} A \bar u
\\=-\Re \int  \partial_t^{k} (u |u|^2) \partial_t^{k+1} \Delta \bar u + \Re \int  
\partial_t^{k} (u |u|^2) \partial_t^{k+1} (|x|^2 \bar u)\end{multline*}
and we proceed by integration by parts
\begin{equation}\label{grasbon}(\dots) =
\sum_{i=1}^2 \Re \int  \partial_t^{k}\partial_{x_i} (u |u|^2) \partial_t^{k+1} \partial_{x_i} \bar u + \Re \int |x|^2 
\partial_t^{k} (u|u|^2) \partial_t^{k+1}\bar  u.
\end{equation}
Next notice that the first term on the r.h.s. in \eqref{grasbon} can be written as follows
\begin{multline*}\sum_{i=1}^2  \Re \int  \partial_t^{k}\partial_{x_i} (u |u|^2) \partial_t^{k+1} \partial_{x_i} \bar u 
\\= \sum_{i=1}^2  \Big( 2 \Re  \int   |u|^2 \partial_t^{k}\partial_{x_i} u \partial_t^{k+1} \partial_{x_i} \bar u
+ \Re \int u^2\partial_t^{k}\partial_{x_i} \bar u  \partial_t^{k+1} \partial_{x_i} \bar u \Big)\\
+  \sum_{\substack{l_1+l_2+l_3=k\\\\max\{l_1, l_2, l_3\}<k}} \Re \Big(  a_{l_1, l_2, l_3} \partial_t^{l_1} \partial_{x_i} u \partial_t^{l_2} u \partial_t^{l_3} \bar u  \partial_t^{k+1} \partial_{x_i} \bar u
+b_{l_1, l_2, l_3} \partial_t^{l_1} \partial_{x_i} u \partial_t^{l_2} u \partial_t^{l_3} \partial_{x_i} \bar u  \partial_t^{k+1} \partial_{x_i} \bar u\Big)
\end{multline*}
where $a_{l_1, l_2, l_3}, b_{l_1, l_2, l_3}$ are suitable real numbers. Rewriting 
\begin{multline*}
(\dots) = \frac d{dt} \sum_{i=1}^2  \Big( \int | \partial_t^k \partial_{x_i} u|^2 |u|^2 
+\frac 12 \Re \int (\partial_t^k \partial_{x_i} \bar u)^2 u^2\Big) 
\\
-\sum_{i=1}^2  \Big( \int | \partial_t^k \partial_{x_i} u|^2 \partial_t (|u|^2) 
+ \frac 12 \Re \int (\partial_t^k \partial_{x_i} \bar u)^2 \partial_t (u^2)\Big)\\
+  \sum_{\substack{l_1+l_2+l_3=k\\\\l_1<k}} \Re \Big(  a_{l_1, l_2, l_3} \partial_t^{l_1} \partial_{x_i} u \partial_t^{l_2} u \partial_t^{l_3} \bar u  \partial_t^{k+1} \partial_{x_i} \bar u
+b_{l_1, l_2, l_3} \partial_t^{l_1} \partial_{x_i} u \partial_t^{l_2} u \partial_t^{l_3} \partial_{x_i} \bar u  \partial_t^{k+1} \partial_{x_i} \bar u\Big)\,,
\end{multline*}
by elementary manipulations on the last two lines we get
\begin{multline*}
(\dots) = \frac d{dt} \sum_{i=1}^2  \Big( \int | \partial_t^k \partial_{x_i} u|^2 |u|^2 
+\frac 12 \Re \int (\partial_t^k \partial_{x_i} \bar u)^2 u^2\Big) 
\\
-\sum_{i=1}^2  \Big( \int | \partial_t^k \partial_{x_i} u|^2 \partial_t (|u|^2) 
+ \frac 12 \Re \int (\partial_t^k \partial_{x_i} \bar u)^2 \partial_t (u^2)\Big)\\
+ \frac d{dt} \sum_{\substack{l_1+l_2+l_3=k\\\\l_1<k}} \Re \Big(  a_{l_1, l_2, l_3} \partial_t^{l_1} \partial_{x_i} u \partial_t^{l_2} u \partial_t^{l_3} \bar u  \partial_t^{k} \partial_{x_i} \bar u
+b_{l_1, l_2, l_3} \partial_t^{l_1} u \partial_t^{l_2} u \partial_t^{l_3} \partial_{x_i} \bar u  \partial_t^{k} \partial_{x_i} \bar u\Big)\\
+  \sum_{\substack{l_1+l_2+l_3=k+1\\l_1\leq k}} \Re \Big(  \tilde a_{l_1, l_2, l_3} \partial_t^{l_1} \partial_{x_i} u \partial_t^{l_2} u \partial_t^{l_3} \bar u  \partial_t^{k} \partial_{x_i} \bar u
+\tilde b_{l_1, l_2, l_3} \partial_t^{l_1} u \partial_t^{l_2} u \partial_t^{l_3} \partial_{x_i} \bar u  \partial_t^{k} \partial_{x_i} \bar u\Big)\,.
\end{multline*}
This last expression is a sum of a linear combination of terms with structure \eqref{gradient1} with $Lu=\partial_{x_i} u$ plus 
a time derivative of a linear combination of terms of type \eqref{gradient2} with $Lu=\partial_{x_i} u$.

Notice that the second term on the r.h.s. in \eqref{grasbon} rewrites
\[
\Re \int  |x|^2 \partial_t^{k} (u |u|^2) \partial_t^{k+1} \bar u = \Re \int  \partial_t^{k} (\langle
x \rangle u |u|^2) \partial_t^{k+1} (\langle x\rangle \bar u) 
-\Re \int  \partial_t^{k} (u |u|^2) \partial_t^{k+1} \bar u 
\]
and arguing as above one checks, by developing first the derivative of order $k$ with respect to time, that this expression is a time derivative of terms of type \eqref{gradient1}, where
$Lu=\langle x \rangle u$ or $Lu=u$,
plus a linear combination of terms with structure \eqref{gradient2} where $Lu=\langle x \rangle u$ or $Lu=u$.
\end{proof}
Next we estimate the energy ${\mathcal R}_{2k+2}$ introduced in Proposition \ref{modifen}.

\begin{prop}\label{R2k+2}  Let $k \in \N$, $R>0$ be given and 
$u(t,x)\in X^{2k+2, b}_T$ be the unique local solution to \eqref{harmosc} with initial condition $\varphi\in {\mathcal H}^{2k+2}$ and $\|\varphi \|_{{\mathcal H}^{1}}<R$,
where $T>0$ is associated with $R$  and $s_0=2k+2$ as in Proposition \ref{Cauchy}.
Assume moreover that $\sup_{t\in (-T, T)} \|u(t,x)\|_{{\mathcal H}^{1}}<R$. Then for every $\delta>0$ there exists $C>0$ such that:
$$\big| \int_{0}^T {\mathcal R}_{2k+2}(u(\tau,x)) d\tau\big |
\leq C \|\varphi\|_{\mathcal{H}^{2k+2}}^{\frac{8k+1}{4k+2}+\delta}.$$
\end{prop}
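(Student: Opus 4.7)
It suffices to bound, uniformly in indices with $l_1+l_2+l_3=k+1$ and $l_1\le k$, a single term
\[
I := \int_0^T\!\!\int_{\R^2}\partial_t^k L u_0\cdot\partial_t^{l_1}L u_1\cdot\partial_t^{l_2}u_2\cdot\partial_t^{l_3}u_3\,dx\,dt\,,
\]
since $\mathcal{R}_{2k+2}$ is a finite linear combination of such terms.

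The plan is to apply, as a first step, the four-linear estimate that is implicit in the proof of Proposition~\ref{poiret} (the displayed equivalent form of the trilinear estimate that one gets after dualizing one factor):
\[
|I|\lesssim T^\gamma \|\partial_t^k L u_0\|_{X^{-\sigma,b'}_T}\sum_\tau \|v_{\tau(1)}\|_{X^{\sigma,b}_T}\|v_{\tau(2)}\|_{X^{\varepsilon,b}_T}\|v_{\tau(3)}\|_{X^{\varepsilon,b}_T}\,,
\]
where $v_1=\partial_t^{l_1}L u_1$, $v_2=\partial_t^{l_2}u_2$, $v_3=\partial_t^{l_3}u_3$, $\sigma,\varepsilon\in(0,\tfrac12)$, $b'<\tfrac12<b$, and $\tau$ runs over permutations of $\{1,2,3\}$. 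Each occurrence of $L$ can then be removed via Proposition~\ref{derivative} at a cost of one unit of spatial regularity (using \eqref{secondoprime} for $\partial_t^k L u_0$, which handles the negative-$s$ side directly, and interpolations of \eqref{primotir}--\eqref{secondosecond} for $\partial_t^{l_1} L u_1$); after this, Proposition~\ref{cauchyderivative} translates every $\|\partial_t^{l_j}u\|_{X^{s_j,b}_T}$ into a product of norms $\|\varphi\|_{\mathcal{H}^{2l_j}}$, $\|\varphi\|_{\mathcal{H}^{2l_j+1}}$, $\|\varphi\|_{\mathcal{H}^{2l_j+2}}$.

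Second, I would interpolate each $\|\varphi\|_{\mathcal{H}^r}$ between $\|\varphi\|_{\mathcal{H}^1}\lesssim R$ and $\|\varphi\|_{\mathcal{H}^{2k+2}}$. A direct calculation, valid uniformly in the two subcases $s\in(0,1]$ and $s\in(1,2]$ of Proposition~\ref{cauchyderivative}, shows that $\|\partial_t^{\ell}u\|_{X^{s,b}_T}$ contributes the exponent $(2\ell+s-1)/(2k+1)+O(\delta)$ on $\|\varphi\|_{\mathcal{H}^{2k+2}}$, while applying an extra $L$ raises that contribution by $1/(2k+1)$. Summing the four contributions in $I$ (with $L$'s on $u_0$ and $u_1$), using $l_0=k$ and $l_1+l_2+l_3=k+1$, and noting that the $-\sigma$ coming from the first factor cancels the $+\sigma$ from one of the $v_\tau$ factors so that $s_0+s_1+s_2+s_3=2\varepsilon$, the total exponent becomes
\[
\frac{4k+2\varepsilon+O(\delta)}{2k+1}\,.
\]
Choosing $\varepsilon=\tfrac14+O(\delta)$ (admissible, since $\varepsilon$ is a free positive parameter in Proposition~\ref{poiret}) makes this coincide with $(8k+1)/(4k+2)+O(\delta)$, the target exponent.

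The main obstacle lies in the simultaneous admissibility of all indices: $\sigma=\tfrac12-\delta$ must match Proposition~\ref{derivative}~\eqref{secondoprime}; $\varepsilon$ must exceed the threshold of Proposition~\ref{poiret} (harmless); and the $b$-indices have to be $>\tfrac12$ whenever Proposition~\ref{cauchyderivative} is invoked and $<\tfrac12$ on the dual side, with the inclusion $X^{s,b}_T\hookrightarrow X^{s,b'}_T$ providing the transition. The restriction $l_1\le k$ in \eqref{gradient2} is essential here: it prevents any factor from being $\partial_t^{k+1}u$, which would lie outside the reach of Proposition~\ref{cauchyderivative} applied to data in $\mathcal{H}^{2k+2}$.
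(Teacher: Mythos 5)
Your strategy differs from the paper's in one decisive respect: you never use the equation. The paper's first move is to symmetrize so that $l_2\geq 1$ and then replace $\partial_t^{l_2}u_2$ by $\pm i\,\partial_t^{l_2-1}Au_2\mp i\,\partial_t^{l_2-1}(u|u|^2)$, which lowers the total number of time derivatives in the quartic term from $2k+1$ to $2k$ and concentrates the excess as an $A$ on one factor; the bilinear estimate \eqref{bilinptv} then redistributes those two spatial derivatives as $\tfrac32+\delta$ on that factor and $\tfrac12+\delta$ on its partner. Skipping this step creates two concrete failures. First, your closing remark that the restriction $l_1\leq k$ ``prevents any factor from being $\partial_t^{k+1}u$'' is false: the constraint only caps $l_1$, and terms with $(l_1,l_2,l_3)=(0,k+1,0)$ genuinely occur in ${\mathcal R}_{2k+2}$ (they come from the coefficient of $\partial_{x_i}u\,\partial_t^{k+1}u\,\bar u\,\partial_t^k\partial_{x_i}\bar u$ in the expansion). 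For such a term your scheme needs $\|\partial_t^{k+1}u\|_{X^{s,b}_T}$ for some $s\geq\varepsilon>0$, and Proposition \ref{cauchyderivative} with $l=k+1$ would require $\varphi\in{\mathcal H}^{2k+4}$ — two derivatives beyond the available regularity. Only the equation step brings this factor back within reach.

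Second, even for admissible index triples your exponent bookkeeping is too optimistic. The formula ``$\|\partial_t^{\ell}u\|_{X^{s,b}_T}$ contributes $(2\ell+s-1)/(2k+1)$'' is used with $\ell=0$ and $s=\varepsilon<1$, where it is negative; but a factor with no time derivatives at regularity $\varepsilon$ is simply bounded by a constant via the a priori ${\mathcal H}^1$ control, so its true contribution is $0$, not $(\varepsilon-1)/(2k+1)$. The deficit cannot be transferred to the other factors. Concretely, for $(l_1,l_2,l_3)=(k,1,0)$ the corrected count in the permutation where $\partial_t^{l_1}Lu_1$ carries the $X^{\sigma,b}$ norm is
\begin{equation}
\frac{(2k-\tfrac12)+(2k+\tfrac12)+(1+\varepsilon)+0}{2k+1}=\frac{8k+2+2\varepsilon}{4k+2}\,,
\end{equation}
which overshoots the target $\frac{8k+1}{4k+2}$ by at least $\frac{1}{4k+2}$ no matter how small you take $\varepsilon$ and $\delta$ (and the other permutations are no better). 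In the paper's argument the zero-derivative factor is instead placed in $X^{\frac12+\delta,b}_T$, which still costs nothing against ${\mathcal H}^1$, while the $\tfrac32+\delta$ lands on the factor carrying $A$, whose time-derivative count has been reduced by one; this is exactly what produces the worst-case exponent $\frac{4k+\frac12}{2k+1}=\frac{8k+1}{4k+2}$. Your reduction of the $L$'s via Proposition \ref{derivative} and the final interpolation through Proposition \ref{cauchyderivative} are fine as far as they go, but without the equation step the claimed exponent is not reached.
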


\begin{proof}
We have to estimate integrals like \eqref{gradient2}. After a Littlewood-Paley
decomposition we are reduced to estimating
\begin{equation*}\sum_{N_0, N_1, N_2, N_3}\int \int
\Delta_{N_0}(\partial_t^{k} L u_0) \Delta_{N_1} (\partial_t^{l_1} L u_1) 
\Delta_{N_2} (\partial_t^{l_2} u_2) \Delta_{N_3} (\partial_t^{l_3}u_3).\end{equation*}
where
$$ l_1+l_2+l_3=k+1, \quad l_1\leq k.$$
Here we have used the compact notation $\int \int $ to denote the space-time integral on the strip $(-T, T)\times \R^2$ and $\Delta_{N}$ denotes the Littlewood-Paley localization associated with the operator 
$A$ at dyadic frequency $N$.
We split the sum in several pieces depending on the frequencies
$N_0, N_1, N_2, N_3$ and we shall make extensively use of the following 
bilinear estimate (see \cite[Proposition~2.3.15]{Po}). 
For every $\delta\in (0,\frac 12]$, $b>\frac 12$ there exists $C>0$
such that: 
\begin{equation}\label{bilinptv}\|(\Delta_N u)(\Delta_M v)\|_{L^2((0, T);L^2)}
\leq C  \left ( \frac{\min\{N, M\}}{\max \{N, M\}} \right )^{\frac 12 - \delta} 
\|\Delta_N u\|_{X^{0, b}_T} \|\Delta_M u\|_{X^{0,b}_T}.
\end{equation}
Using the equation solved by $u$ and noticing that, with the imposed conditions on $l_1, l_2, l_3$ 
we may assume $l_2\geq 1$ (otherwise $l_3\geq 1$ and it is symmetric) we get
\begin{multline}\label{secon}\Big|\int \int
(\partial_t^k L u_0) (  \partial_t^{l_1} L u_1) \partial_t^{l_2} u_2  \partial_t^{l_3} u_3\Big|
\leq C \Big(\Big|\int \int
(\partial_t^k L u_0) (  \partial_t^{l_1} L u_1) (\partial_t^{l_2-1} Au_2)  \partial_t^{l_3} u_3 \Big|\\+  \int \int 
|(\partial_t^k L u_0)| |(  \partial_t^{l_1} L u_1)|| (\partial_t^{l_2-1} (u|u|^2)|  |\partial_t^{l_3} u|\Big)\,.
 \end{multline}
The second term on the right hand side is estimated  by Cauchy-Schwarz with 
\begin{multline*}
\Big(\int_0^T \|\partial_t^k L u_0(\tau)\|_{L^2} \|\partial_t^{l_1} L u_1\|_{L^2}d\tau\Big)
\| (\partial_t^{l_2-1} (u|u|^2)\|_{L^\infty((0,T);L^\infty)} \| \partial_t^{l_3} u\|_{L^\infty((0,T);L^\infty)} \\
\leq \Big(\int_0^T \|\partial_t^k L u_0(\tau)\|_{L^2} \|\partial_t^{l_1} L u_1\|_{L^2}d\tau\Big)
\| (\partial_t^{l_2-1} (u|u|^2)\|_{L^\infty((0,T);{\mathcal H}^{1+\delta})} \| \partial_t^{l_3} u\|_{L^\infty((0,T);{\mathcal H}^{1+\delta})} 
\end{multline*}
where we  used Sobolev embedding. Using \eqref{012} we proceed with
\begin{multline*}
(\dots) \leq C 
\|\varphi\|_{{\mathcal H}^{2k+1}} \|\varphi\|_{{\mathcal H}^{2l_1+1}} \|\varphi\|_{{\mathcal H}^{2l_3+1}} \|\varphi\|_{{\mathcal H}^{2k+2}}^{\delta} \| (\partial_t^{l_2-1} (u|u|^2)\|_{L^\infty((0,T);{\mathcal H}^{1+\delta})}\\\leq C 
\|\varphi\|_{{\mathcal H}^{2k+2}}^{\eta+\eta_1+\eta_3}\|\varphi\|_{{\mathcal H}^{1}}^{3-(\eta+\eta_1+\eta_3)} \|\varphi\|_{{\mathcal H}^{2k+2}}^{\delta} \| (\partial_t^{l_2-1} (u|u|^2)\|_{L^\infty((0,T);{\mathcal H}^{1+\delta})}
\end{multline*} 
where
$$
\begin{cases}
\eta(2k+2)+(1-\eta)=2k+1\\
\eta_i(2k+2)+(1-\eta_i)=2l_i+1, \quad i=1,3
\end{cases}
$$
and hence, by using the bound assumed on $\|\varphi\|_{{\mathcal H}^{1}}$ we can continue the estimate above as follows
$$\dots\leq C \|\varphi\|_{{\mathcal H}^{2k+2}}^{\frac{2k+2l_1+2l_3}{2k+1}}\|\varphi\|_{{\mathcal H}^{2k+2}}^{\delta} \| (\partial_t^{l_2-1} (u|u|^2)\|_{L^\infty((0,T);{\mathcal H}^{1+\delta})}\,.$$
Expanding $\partial_t^{l_2-1}(u|u|^2)$ and using that ${\mathcal H}^{1+\delta}$ is an algebra we get
\begin{align*}\| (\partial_t^{l_2-1} (u|u|^2)\|_{L^\infty((0,T);{\mathcal H}^{1+\delta})} \leq & C \sum_{j_1+j_2+j_3=l_2-1} \|\partial_t^{j_1} u\|_{L^\infty((0,T);{\mathcal H}^{1+\delta})}\|\partial_t^{j_2} u\|_{L^\infty((0,T);{\mathcal H}^{1+\delta})}\|\partial_t^{j_3} u\|_{L^\infty((0,T);{\mathcal H}^{1+\delta})}\\
\leq & C \sum_{j_1+j_2+j_3=l_2-1} \|\varphi\|_{{\mathcal H}^{2j_1+1}}\|u\|_{{\mathcal H}^{2j_2+1}}\|\varphi\|_{{\mathcal H}^{2j_3+1}} \|\varphi\|_{{\mathcal H}^{2k+2}}^{\delta} 
  \\\leq &C \sum_{j_1+j_2+j_3=l_2-1} \|\varphi\|_{{\mathcal H}^{2k+2}}^{\theta_1+\theta_2+\theta_3}\|\varphi\|_{{\mathcal H}^{1}}^{3-(\theta_1+\theta_2+\theta_3)}\|\varphi\|_{{\mathcal H}^{2k+2}}^{\delta}
  \end{align*}
where $$\theta_i(2k+2)+(1-\theta_i)=2j_i+1, \quad i=1,2,3$$ and recalling the a priori bound assumed on $\|\varphi\|_{{\mathcal H}^{1}}$
we conclude that
$$\| (\partial_t^{l_2-1} (u|u|^2)\|_{L^\infty((0,T);{\mathcal H}^{1+\delta})}\leq C  \|\varphi\|_{{\mathcal H}^{2k+2}}^{\frac{2l_2-2}{2k+1}}\,.$$
By combining the estimates above we get that the second term on the right-hand side  in \eqref{secon} 
can be estimated up to a constant by
$$\|\varphi\|_{{\mathcal H}^{2k+2}}^{\frac{2k+2l_1+2l_2+2l_3-2}{2k+1}+\delta}=\|\varphi\|_{{\mathcal H}^{2k+2}}^{\frac{4k}{2k+1}+\delta}.$$
We now focus on the first term on the right-hand side in \eqref{secon}
and by Littlewood-Paley
decomposition we are reduced to estimating
\begin{equation*}\sum_{N_0, N_1, N_2, N_3}\int \int
\Delta_{N_0}(\partial_t^{k} L u_0) \Delta_{N_1} (\partial_t^{l_1} L u_1) 
\Delta_{N_2} (\partial_t^{l_2-1} Au_2) \Delta_{N_3} (\partial_t^{l_3}u_3).\end{equation*}
Here we used again $\int \int $ to denote the space-time integral on the strip $(-T, T)\times \R^2$ and $\Delta_{N}$ still denotes the Littlewood-Paley localization associated with the operator 
$A$ at dyadic frequency $N$.
We split the sum in several pieces depending on the frequencies
$N_0, N_1, N_2, N_3$ and we shall make again extensive use of the bilinear estimate \eqref{bilinptv}. Next we consider several subcases.
\\
\\
{\em First subcase: $\min \{N_0, N_2\}\geq \max \{N_1, N_3\}$}
\\
\\
By Cauchy-Schwarz,
\begin{multline*}\Big |\int \int
\Delta_{N_0}(\partial_t^k L u_0) \Delta_{N_1} ( \partial_t^{l_1}L u_1) 
\Delta_{N_2} (\partial_t^{l_2-1}A u_2) \Delta_{N_3} (\partial_t^{l_3}u_3)\Big|
\\\leq \|
\Delta_{N_0}(\partial_t^k L u_0)
\Delta_{N_1} (\partial_t^{l_1}Lu_1)\|_{L^2((0, T);L^2)} \| \Delta_{N_2} (\partial_t^{l_2-1}A u) 
 \Delta_{N_3} (\partial_t^{l_3}u)\|_{L^2((0, T);L^2)}
\end{multline*}
and by \eqref{bilinptv} we can continue as follows
\begin{align*}
(\dots) \leq  & C \frac{(N_1 N_3)^{\frac 12-\delta}}{(N_0 N_2)^{\frac 12 -\delta}} 
\|\Delta_{N_0}(\partial_t^k  L u)\|_{X^{0,b}_T} \|\Delta_{N_1}(\partial_t^{l_1}L u)\|_{X^{0,b}_T}
\|\Delta_{N_2} (\partial_t^{l_2-1} A u_2)\|_{X^{0,b}_T}
\|\Delta_{N_3} (\partial_t^{l_3} u)\|_{X^{0,b}_T}\\
\leq & C \frac{N_3^{\frac 12-\delta}}{N_2^{\frac 12 -\delta}} 
\|\Delta_{N_0}( \partial_t^k L u)\|_{X^{0,b}_T} \|\Delta_{N_1}(\partial_t^{l_1} L u)\|_{X^{0,b}_T}
\|\Delta_{N_2} (\partial_t^{l_2-1}A u_2)\|_{X^{0,b}_T}
\|\Delta_{N_3} (\partial_t^{l_3}u)\|_{X^{0,b}_T}
\\
\leq & C
\|\Delta_{N_0}( \partial_t^k L u)\|_{X^{0,b}_T} 
\|\Delta_{N_1} ( \partial_t^{l_1}L u) \|_{X^{0,b}_T}
       \|\Delta_{N_2} ( \partial_t^{l_2-1}Au)\|_{X^{-\frac 12+\delta,b}_T}\|\Delta_{N_3} ( \partial_t^{l_3}u)\|_{X^{\frac 12-\delta,b}_T}\,.
       \end{align*}
Summarizing,
\begin{multline*}\sum_{\substack{N_0, N_1, N_2, N_3\\
\min\{N_0,N_2\}\geq  \max\{N_1, N_3\}}} 
\Big |\int \int
\Delta_{N_0}( L u^0) \Delta_{N_1} (L u_1) 
\Delta_{N_2} (A u_2) \Delta_{N_3} (u_3)\Big |
\\
\leq C \| L  \partial_t^k u\|_{X^{\delta,b}_T} 
\|L  \partial_t^{l_1}u \|_{X^{\delta,b}_T}
\|A \partial_t^{l_2-1}u\|_{X^{-\frac 12+\delta,b}_T} \| \partial_t^{l_3}u
\|_{X^{\frac 12-\delta,b}_T}
\\
\leq C \| \partial_t^k u\|_{X^{1+\delta,b}_T} 
\| \partial_t^{l_1} u\|_{X^{1+\delta,b}_T}
\|  \partial_t^{l_2-1}u\|_{X^{\frac 32+\delta,b}_T} \| \partial_t^{l_3}u
\|_{X^{\frac 12+\delta,b}_T}\end{multline*}
where we used Lemma \ref{derivative} at the last step,  assuming we chose $b>\frac 12$ in such a way the estimate at the last line fits with Lemma \ref{derivative}.
\\
\\
{\em Second subcase: $\min \{N_1, N_2\}\geq \max \{N_0, N_3\}$}
\\
\\
We can argue as above and we are reduced to the previous case by noticing that
we have the inequality 
$\frac{N_0 N_3}{N_1N_2}\leq \frac{N_3}{N_2}$ since in this subcase $N_0\leq N_1$.
\\
\\
{\em Third subcase: $\min \{N_3, N_2\}\geq \max \{N_0, N_1\}$}
\\
\\
We can argue as above and we are reduced to the first subcase by noticing that
we have the inequality 
$N_0N_1\leq N_3^2$  and hence $\frac{N_0N_1}{N_2N_3}\leq \frac{N_3}{N_2}$.
\\
\\
{\em Fourth subcase: $\min \{N_1, N_3\}\geq \max \{N_0, N_2\}$}
\\
\\
We can argue as above and we are reduced to the first subcase by noticing that
we have the inequality $\frac{N_0N_2}{N_1N_3}\leq \frac{N_3}{N_2}$. In fact it is equivalent to $N_0N_2^2\leq N_1N_3^2$ which clearly holds in that subcase.
\\
\\
{\em Fifth subcase: $\min \{N_0, N_3\}\geq \max \{N_1, N_2\}$}
\\
\\
We can argue as above and we are reduced to the first subcase by noticing that
we have the inequality $\frac{N_1N_2}{N_0N_3}\leq \frac{N_3}{N_2}$. In fact it is equivalent to $N_1N_2^2\leq N_0N_3^2$ which clearly holds in this subcase.
\\
\\
{\em Sixth subcase: $\min \{N_0, N_1\}\geq \max \{N_2, N_3\}$}
\\
\\
We can argue as above and we are reduced to the first subcase by noticing that
we have the inequality $\frac{N_2N_3}{N_0N_1}\leq \frac{N_3}{N_2}$ which in turn follows from the fact that in this subcase $N_2^2\leq N_0N_1$.
\\
\\
We are therefore left with proving
$$\| \partial_t^k u\|_{X^{1+\delta,b}_T} 
\| \partial_t^{l_1} u\|_{X^{1+\delta,b}_T}
\|  \partial_t^{l_2-1}u\|_{X^{\frac 32+\delta,b}_T} \| \partial_t^{l_3}u
\|_{X^{\frac 12+\delta,b}_T}\leq C  \|\varphi\|_{\mathcal{H}^{2k+2}}^{\frac{8k+1}{4k+2}+\delta}.$$
Using \eqref{012} and \eqref{12} we can control the right-hand side with
$$\|\varphi\|_{{\mathcal H}^{2k+1}}\|\varphi\|_{{\mathcal H}^{2l_1+1}}\|\varphi\|_{{\mathcal H}^{2l_2-1}}^\frac 12 
\|\varphi\|_{{\mathcal H}^{2l_2}}^\frac 12 \|\varphi\|_{{\mathcal H}^{2l_3}}^\frac 12 
\|\varphi\|_{{\mathcal H}^{2l_3+1}}^\frac 12 \|\varphi\|_{{\mathcal H}^{2k+2}}^\delta$$
which, by interpolation and recalling the a priori bound on ${\mathcal H}^1$ norm, can be estimated with 
$$\|\varphi\|_{{\mathcal H}^{2k+2}}^\theta \|\varphi\|_{{\mathcal H}^{2l_1+1}}^{\theta_1}\|\varphi\|_{{\mathcal H}^{2l_2-1}}^{\frac 12 \gamma_2} 
\|\varphi\|_{{\mathcal H}^{2l_2}}^{\frac 12 \eta_2} \|\varphi\|_{{\mathcal H}^{2l_3}}^{\frac 12 \eta_3} 
\|\varphi\|_{{\mathcal H}^{2l_3+1}}^{\frac 12 \theta_3} \|\varphi\|_{{\mathcal H}^{2k+2}}^\delta$$
where
$$\begin{cases}
\theta (2k+2)+(1-\theta)=2k+1\\
\theta_1 (2k+2)+(1-\theta_1)=2l_1+1\\
\theta_3 (2k+2)+(1-\theta_3)=2l_3+1\\
\gamma_2 (2k+2)+(1-\gamma_2)=2l_2-1\\
\eta_2 (2k+2)+(1-\eta_2)=2l_2\\
\eta_3 (2k+2)+(1-\eta_3)=2l_3.
\end{cases}
$$
We conclude by computing $\theta, \theta_1, \theta_3,\gamma_2, \eta_2, \eta_3$ and noticing that
$$\theta+\theta_1+\frac 12 \gamma_2+\frac 12 \eta_2+\frac 12 \eta_3+\frac 12 \theta_3=\frac{8k+1}{4k+2}\,,$$
and this concludes the proof.
 \end{proof}

Next we estimate the terms involved in the expression of ${\mathcal S}_{2k+2}$ introduced in Proposition \ref{modifen}.

\begin{prop}\label{S2k+2} 
Let $k \in \N$, $R>0$ be given and 
$u(t,x)\in X^{2k+2, b}_T$ be the unique local solution to \eqref{harmosc} with initial condition $\varphi\in {\mathcal H}^{2k+2}$ and $\|\varphi \|_{{\mathcal H}^{1}}<R$,
where $T>0$ is associated with $R$  and $s_0=2k+2$ as in Proposition \ref{Cauchy}.
Assume moreover that $\sup_{t\in (-T, T)} \|u(t,x)\|_{{\mathcal H}^{1}}<R$. Then for every $\delta>0$ there exists $C>0$ such that
$$\sup_{t\in (-T, T)}|{\mathcal S}_{2k+2}(u(t,x))|\leq C \|\varphi\|_{{\mathcal H}^{2k+2}}^{\frac{4k}{2k+1}+\delta}\,.$$
\end{prop}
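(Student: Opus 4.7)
The plan is to estimate $\mathcal S_{2k+2}(u(t,x))$ pointwise in $t\in(-T,T)$, term by term. Unlike Proposition~\ref{R2k+2}, where a space-time integral forced us into the $X^{s,b}$ bilinear machinery, here each summand of $\mathcal S_{2k+2}$ is a spatial integral at a single time, so Hölder's inequality should suffice. For a typical term
\begin{equation*}
\int \partial_t^{k} L u_0\,\partial_t^{m_1} L u_1\,\partial_t^{m_2} u_2\,\partial_t^{m_3} u_3, \qquad m_1+m_2+m_3=k,\; m_1<k,
\end{equation*}
I will put the two factors carrying $L$ in $L^2$ and the two remaining factors in $L^\infty$. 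Since $L\in\{\partial_{x_i},\langle x\rangle,\mathrm{id}\}$ sends $\mathcal H^1$ continuously into $L^2$ by the norm equivalence \eqref{equivalence}, and the 2D Sobolev embedding $\mathcal H^{1+\delta}\hookrightarrow L^\infty$ handles the two remaining factors, Hölder reduces the problem to bounding
\begin{equation*}
\|\partial_t^{k} u(t)\|_{\mathcal H^1}\,\|\partial_t^{m_1} u(t)\|_{\mathcal H^1}\,\|\partial_t^{m_2} u(t)\|_{\mathcal H^{1+\delta}}\,\|\partial_t^{m_3} u(t)\|_{\mathcal H^{1+\delta}}\,.
\end{equation*}

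Next I would trade time derivatives for the harmonic oscillator using Proposition~\ref{equibalence}: for $j\in\{k,m_1,m_2,m_3\}$ and $s\in\{1,1+\delta\}$ one has $\|\partial_t^{j} u(t)\|_{\mathcal H^s}\lesssim \|u(t)\|_{\mathcal H^{s+2j}}+\|u(t)\|_{\mathcal H^{s+2j-1}}^{1+\delta}$. The leading contribution reads
\begin{equation*}
\|u(t)\|_{\mathcal H^{2k+1}}\,\|u(t)\|_{\mathcal H^{2m_1+1}}\,\|u(t)\|_{\mathcal H^{2m_2+1+\delta}}\,\|u(t)\|_{\mathcal H^{2m_3+1+\delta}},
\end{equation*}
which I will interpolate between the a priori bounded $\mathcal H^1$ norm and $\mathcal H^{2k+2}$, using $\|u(t)\|_{\mathcal H^\sigma}\lesssim \|u(t)\|_{\mathcal H^{2k+2}}^{(\sigma-1)/(2k+1)}$. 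The constraint $m_1+m_2+m_3=k$ makes the interpolation exponents of $\|u(t)\|_{\mathcal H^{2k+2}}$ add up to
\begin{equation*}
\frac{2k+2m_1+(2m_2+\delta)+(2m_3+\delta)}{2k+1}=\frac{4k}{2k+1}+\delta',
\end{equation*}
exactly the target exponent.

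The pointwise bound on $\|u(t)\|_{\mathcal H^{2k+2}}$ in terms of $\|\varphi\|_{\mathcal H^{2k+2}}$ then follows from Proposition~\ref{Cauchy} — namely \eqref{szero} with $s_0=2k+2$ — combined with the continuous embedding $X^{2k+2,b}_T\hookrightarrow L^\infty((-T,T);\mathcal H^{2k+2})$ valid for $b>\tfrac12$. I do not anticipate any serious technical obstacle: everything happens at a fixed time and no bilinear gain is exploited. The one point requiring care is verifying that the correction term $\|u\|_{\mathcal H^{s+2j-1}}^{1+\delta}$ from Proposition~\ref{equibalence} produces, after interpolation, an exponent strictly smaller than $\frac{4k}{2k+1}$; this is immediate since replacing the exponent $s+2j$ by $(1+\delta)(s+2j-1)$ reduces the interpolation budget by a factor $[1-\delta(s+2j-2)]/(2k+1)>0$ for $\delta$ small, and is therefore safely absorbed by the arbitrarily small loss $\delta'$ in the final exponent.
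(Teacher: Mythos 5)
Your proposal is correct and follows essentially the same route as the paper's proof: H\"older with the two $L$-factors in $L^2$ and the remaining factors in $L^\infty$ via Sobolev embedding, Proposition \ref{equibalence} to trade time derivatives for $\mathcal H$-norms of $u$, interpolation between the a priori bounded $\mathcal H^1$ norm and $\mathcal H^{2k+2}$ giving the exponent $\frac{4k}{2k+1}$, and finally \eqref{szero} together with the embedding $X^{2k+2,b}_T\subset \mathcal C((-T,T);\mathcal H^{2k+2})$ to pass from $u(t)$ to $\varphi$. No substantive difference.
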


\begin{proof}
We prove the desired estimate for every expression with type \eqref{gradient1}.
Indeed by H\"older we have for every fixed $t\in (-T, T)$
\[
    \int \partial_t^{k} L  u_0 \partial_t^{m_1} L u_1
\partial_t^{m_2} u_2 \partial_t^{m_3} u_3
\leq \|\partial_t^{k} L  u_0\|_{L^2} \|\partial_t^{m_1} L  u_1\|_{L^2} 
\|\partial_t^{m_2} u_2\|_{L^\infty} \| \partial_t^{m_3} u_3\|_{L^\infty}\,,
\]
and by Sobolev embedding and \eqref{equivalence} we proceed with
\begin{align*}
(\dots) \leq & C \|\partial_t^{k} L  u\|_{L^2} \|\partial_t^{m_1} L  u\|_{L^2}
\|\partial_t^{m_2} u\|_{{\mathcal H}^1}^{1-\delta} \|\partial_t^{m_2} u\|_{{\mathcal H}^2}^\delta  
\|\partial_t^{m_3} u\|_{{\mathcal H}^1}^{1-\delta} \|\partial_t^{m_3} u\|_{{\mathcal H}^2}^\delta
\\
\leq & C \|\partial_t^{k} u\|_{{\mathcal H}^1} \|\partial_t^{m_1} u\|_{{\mathcal H}^1}
\|\partial_t^{m_2} u\|_{{\mathcal H}^1}^{1-\delta} \|\partial_t^{m_2} u\|_{{\mathcal H}^2}^\delta  
\|\partial_t^{m_3} u\|_{{\mathcal H}^1}^{1-\delta} \|\partial_t^{m_3} u\|_{{\mathcal H}^2}^\delta.
\end{align*}
Then we can use \eqref{eqENimproved2d} to get
\begin{align*}
  (\dots) \leq & C \|u\|_{{\mathcal H}^{2k+1}} \|u\|_{{\mathcal H}^{2m_1+1}}
\|u\|_{{\mathcal H}^{2m_2+1}} \|u\|_{{\mathcal H}^{2m_3+1}} \|u\|_{{\mathcal H}^{2k+2}}^\delta\\
\leq & C \|\varphi\|_{{\mathcal H}^{2k+1}} \|\varphi \|_{{\mathcal H}^{2m_1+1}}
       \|\varphi\|_{{\mathcal H}^{2m_2+1}} \|\varphi\|_{{\mathcal H}^{2m_3+1}} \|\varphi\|_{{\mathcal H}^{2k+2}}^\delta
       \end{align*}
where $\delta>0$ is a small constant that can change from line to line and the last estimate follows from the embedding
$X^{s, b}_T\subset {\mathcal C}((-T, T); {\mathcal H}^s)$ for $b>\frac 12$ and \eqref{szero}.
Next we choose $\theta, \theta_1, \theta_2, \theta_3\in [0,1]$ such that
$$
\begin{cases}
\theta (2k+2)+ (1-\theta)=2k+1\\
\theta_i(2k+2)+ (1-\theta_i)= 2m_i+1, \quad i=1,2,3
\end{cases}
$$
and by interpolation,  we proceed with
\begin{equation*}\int \partial_t^{k} L  u_0 \partial_t^{m_1} L u_1
\partial_t^{m_2} u_2 \partial_t^{m_3} u_3 \leq C
 \|\varphi\|_{{\mathcal H}^{2k+2}}^{\theta+\theta_1+\theta_2+\theta_3} \|\varphi\|_{{\mathcal H}^{2k+2}}^\delta
\end{equation*}
and we conclude by computing explicitly $\theta+\theta_1+\theta_2+\theta_3$.
\end{proof}

\noindent {\em Proof of Theorem \ref{main}} It will follow as a consequence of Propositions \ref{modifen}, \ref{R2k+2}, \ref{S2k+2}.
Let
\begin{equation}\label{iterationbound}\sup_{t\in (-\infty, \infty)} \|u(t,x)\|_{\mathcal H^1}=R\end{equation}
then $R<\infty$ by \eqref{basicham}. By integration on the strip $(0,T)$ of the identity \eqref{tobeint} we get
$$
\frac 12\|\partial_t^k A u(T,x)\|_{L^2}^2 + {\mathcal S}_{2k+2} (u(T,x))
=\frac 12\|\partial_t^k A u(0,x)\|_{L^2}^2 
+{\mathcal S}_{2k+2} (u(0,x))+ \int_0^T {\mathcal R}_{2k+2}(u(\tau ,x))d\tau
$$
where $T$ is the one defined in Propositions \ref{R2k+2} and \ref{S2k+2} with $R$ is defined as above.
Then we get 
\begin{equation}\label{secondterm}
\frac 12\|\partial_t^k A u(T,x)\|_{L^2}^2 -\frac 12\|\partial_t^k A u(0,x)\|_{L^2}^2 
\leq C \Big (\|u(0,x) \|_{{\mathcal H}^{2k+2}}^{\frac{4k}{2k+1}+\delta}+ \|u(0,x)\|_{\mathcal{H}^{2k+2}}^{\frac{8k+1}{4k+2}+\delta} \Big).
\end{equation}
Next notice that if we assume that $u(t,x)$ is the nontrivial solution (otherwise the conclusion is trivial) then
$\|u(t,x)\|_{\mathcal{H}^{2k+2}}\geq \|u(t,x)\|_{{L}^2}=const>0$ and the second term on the r.h.s. in \eqref{secondterm} may be absorbed by the first one provided that we modify the multiplicative constant; then
\begin{equation}\label{secondtermprime}
\frac 12\|\partial_t^k A u(T,x)\|_{L^2}^2 -\frac 12\|\partial_t^k A u(0,x)\|_{L^2}^2 
\leq C \|u(0,x)\|_{\mathcal{H}^{2k+2}}^{\frac{8k+1}{4k+2}+\delta}.
\end{equation}
One easily checks that by \eqref{iterationbound} the bound above can be iterated with the same constants, namely
\begin{equation}\label{secondtermprime}
\frac 12\|\partial_t^k A u((n+1)T,x)\|_{L^2}^2 -\frac 12\|\partial_t^k A u(nT,x)\|_{L^2}^2 
\leq C \|u(nT,x)\|_{\mathcal{H}^{2k+2}}^{\frac{8k+1}{4k+2}+\delta}
\end{equation}
for every $n\in \N$. By summing up for $n\in [0, N-1]$ we obtain
$$\|\partial_t^k A u(NT,x)\|_{L^2}^2\leq C \sum_{n\in \{0,\dots,N-1\}} \|u(nT,x)\|_{\mathcal{H}^{2k+2}}^{\frac{8k+1}{4k+2}+\delta}$$
and it implies
$$ \sup_{n\in [0,N]} \|\partial_t^k A u(nT,x)\|_{L^2}^2\leq C N \Big( \sup_{n\in [0, N]} \|u(nT,x)\|_{\mathcal{H}^{2k+2}}\Big)^{\frac{8k+1}{4k+2}+\delta}$$
which implies by \eqref{eqENimproved2d} the estimate
$$\sup_{n\in [0,N]} \|u(nT,x)\|_{\mathcal{H}^{2k+2}}\leq CN^\frac{2(2k+1)}{3}$$
and $\forall N\in \N$,
$$\|u(NT,x)\|_{\mathcal{H}^{2k+2}}\leq CN^\frac{2(2k+1)}{3}\,.$$
By using \eqref{szero} it is easy to deduce that the estimate above implies 
$$\sup_{t\in [NT, (N+1)T]} \|u(t,x)\|_{\mathcal{H}^{2k+2}}\leq CN^\frac{2(2k+1)}{3}$$
provided that we suitably modify the multiplicative constant $C$.
Summarizing we get that, for all $t>0$,
$$\|u(t,x)\|_{\mathcal{H}^{2k+2}}\leq Ct^\frac{2(2k+1)}{3}\,.$$
The same argument works for $t<0$.

\section*{Appendix}
\label{sec:appendix}
We intend to provide a direct proof, based on integration by parts, of the crucial bilinear estimate from \cite{Po},
for solutions to
\begin{equation}
  \label{eq:1}
  i\partial_{t} u -\Delta u +|x|^{2} u=0\,.
\end{equation}
\begin{thm}
Let $1\leq M\leq N$ be dyadic numbers ; for $T\in (0, \infty)$ there exists $C_{T}$ such that
\begin{equation}\label{bilinharmosc}\|u_N v_M\|_{L^2((0,T);L^2)}^2\leq C_T MN^{-1} \|u_N(0)\|_{L^2}^2 \|v_N(0)\|_{L^2}^2\end{equation}
where $u_N$ and $v_N$ are spectrally localized solutions to \eqref{eq:1} (namely
$\Delta_N u_N=u_N$, $\Delta_M v_M=v_M$) respectively with initial datum $u_N(0)$, $v_M(0)$.
\end{thm}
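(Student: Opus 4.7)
The plan is to follow the bilinear interaction virial method of [PlVe], adapted to the presence of the harmonic potential. The central point is that since the potential $|x|^{2}$ is real, the mass continuity equations
\[
\partial_{t}|u_{N}|^{2}+\nabla_{x}\cdot J_{N}=0,\qquad \partial_{t}|v_{M}|^{2}+\nabla_{y}\cdot J_{M}=0,
\]
with $J_{f}:=2\operatorname{Im}(\bar{f}\,\nabla f)$, are formally identical to the free-Schr\"odinger case; the potential will only appear when we commute the weight against the Hamiltonian at the level of the second time derivative of the virial.

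I would first introduce the interaction virial
\[
I(t)\;:=\;\iint |x-y|\;|u_{N}(t,x)|^{2}\,|v_{M}(t,y)|^{2}\,dx\,dy,
\]
compute $\dot I$ by inserting the continuity equations (integration by parts), and then $\ddot I$ by re-applying the Schr\"odinger equation to the currents. This produces two contributions: on the one hand, the Planchon--Vega bilinear Morawetz quadratic form (in $d=2$ obtained from $-\Delta|z|=1/|z|$, together with a nonnegative Hessian-type term), which is the positive part driving the estimate; on the other hand, a \emph{potential error} arising from the commutator of $|x|^{2}-|y|^{2}$ with $x\cdot\nabla_{x}$ and $y\cdot\nabla_{y}$ against the weight $|x-y|$, producing terms of the schematic form $(x+y)\cdot (x-y)/|x-y|$ integrated against $\rho_{N}\rho_{M}$.

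Second, I would identify the positive part of $\ddot I$ as an upper bound for the correct bilinear $L^{2}$ norm with the $MN^{-1}$ weight. The key input is spectral localization: for Hermite eigenfunctions at level $N$ one has both $\|\nabla u_N\|_{L^2}\lesssim \sqrt{N}\|u_N\|_{L^2}$ and $\| |x| u_N\|_{L^2}\lesssim \sqrt{N}\|u_N\|_{L^2}$, and similarly for $v_M$ with $\sqrt M$. Combined with the fact that the Planchon--Vega form morally weights $|u_N v_M|^2$ by $1/|x-y|$ and that spectral localization forces a natural length scale $\sqrt M$ on the support of $v_M$, the Bernstein-type inequalities for Hermite expansions yield a lower bound $\ddot I(t)\gtrsim (N/M)\|u_N(t)v_M(t)\|_{L^2_x}^2$ minus the potential error, which in turn is bounded uniformly in $t$ by $\sqrt{N}\cdot\|u_N(0)\|_{L^2}^2\|v_M(0)\|_{L^2}^2$.

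Third, integrating twice on $[0,T]$ and estimating the boundary terms by
\[
|I(t)|\le \| |x|u_N(t)\|_{L^2}^2\|v_M(t)\|_{L^2}^2+\|u_N(t)\|_{L^2}^2\| |y|v_M(t)\|_{L^2}^2\lesssim N\,\|u_N(0)\|_{L^2}^2\|v_M(0)\|_{L^2}^2,
\]
together with the analogous $O(N)$ bound on $\dot I(0)$, gives after rearrangement the desired
\[
\int_0^T \|u_N(t)\,v_M(t)\|_{L^2}^2\,dt\;\leq\; C_T\,\tfrac{M}{N}\,\|u_N(0)\|_{L^2}^2\|v_M(0)\|_{L^2}^2,
\]
with $C_T$ linear in $T$ due to the time integration of the potential error. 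The main obstacle is handling this potential error: in the free case one obtains a uniform-in-time estimate, but here the commutators $[|x|^2-|y|^2,|x-y|]$ force the constant to grow with $T$, and one must carefully show that Cauchy--Schwarz combined with the a priori $\||x|u_N\|_{L^2}\lesssim\sqrt N\|u_N\|_{L^2}$ (and symmetric) is enough to absorb this error into the boundary contribution without spoiling the $MN^{-1}$ scaling.
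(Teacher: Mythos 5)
Your overall framework---a bilinear interaction virial in the spirit of Planchon--Vega, with the harmonic potential entering only through an extra first-order term that is absorbed into a $T$-dependent error---is indeed the paper's starting point. But the proposal has a genuine gap at its core: the choice of the weight $|x-y|$ cannot produce the $MN^{-1}$ gain. In dimension two the Hessian of $|x-y|$ is $|x-y|^{-1}\bigl(I-\widehat{(x-y)}\otimes\widehat{(x-y)}\bigr)$, which is degenerate, and the sign of $-\Delta\Delta|z|$ is unfavorable, so the recombined positive term controls only an angular, $|D|^{1/2}$-smoothing type quantity; there is no positive term ``weighting $|u_Nv_M|^2$ by $1/|x-y|$''. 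The claimed lower bound $\ddot I(t)\gtrsim (N/M)\,\|u_N v_M\|_{L^2}^2$ is asserted via ``Bernstein-type inequalities'' but never derived, and I do not believe it holds: Bernstein gives upper bounds on derivatives of spectrally localized functions, and the reverse inequality cannot be fed pointwise into a degenerate quadratic form. The paper instead takes a convex weight $\rho_M$ applied coordinate-wise, equal to $\tfrac M2 z^2+\tfrac1{2M}$ for $|z|\le 1/M$ and to $|z|$ beyond, whose second derivative $M\,\mathbf{1}_{|z|<1/M}$ is a mollified delta at scale $1/M$; this is what produces the localized estimate \eqref{eq:6new} with the weight $M$ on the ball $|x-y|<1/M$, and it is the only place the relative scales of $M$ and $N$ enter the virial computation.

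Two further ingredients, both absent from your plan, are then indispensable. First, the virial output controls the non-diagonal quantity $u_N(x)\nabla_y\bar v_M(y)+\bar v_M(y)\nabla_x u_N(x)$ averaged over $|x-y|<1/M$; converting this into the diagonal quantity $\int_0^T\|\nabla(u_Nv_M)\|_{L^2}^2\,dt\lesssim MN$ requires a local elliptic regularity lemma for $A=-\Delta+|x|^2$ (Lemma \ref{ellip}, adapted from \cite{FP}), applied to $v_M$ at scale $\lambda=M$ together with the same virial bound with $v_M$ replaced by $Av_M$. Second, even granting $\int_0^T\|u_Nv_M\|_{\mathcal H^1}^2\,dt\lesssim MN$, dividing naively by the frequency of the product only yields $\|u_Nv_M\|_{L^2_{t,x}}^2\lesssim M$, not $MN^{-1}$; the paper's final step writes $u_N=N^{-2}A\tilde u_N$, commutes $A$ through the product, and uses the boundedness of $N^{-1}\sqrt A\,S_N$ to trade \emph{two} powers of $N$ against the one-derivative control of the product. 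Without the truncated weight, the elliptic lemma, and this last spectral argument, the proposed scheme cannot reach the stated $MN^{-1}$ scaling.
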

Such bilinear estimates were first obtained for solutions to the classical linear Schr\"odinger equation in \cite{b2}, using direct computations in Fourier variables. In \cite{CKSTT}, so-called interaction Morawetz interaction estimates were introduced for the 3D nonlinear Schr\"odinger equation, relying on a bilinear version of the classical Morawetz estimate. Here, we rely on the bilinear computation from \cite{PlVe} that non only extended such bilinear virial estimates to low dimensions but also allowed to recover Bourgain's estimates from \cite{b2}. We will follow the strategy from \cite{FP} where bilinear estimates on bounded domains were obtained, bypassing the need for Fourier localization. We split the proof in several steps. First we prove that, for a given $T\in (0, \infty)$:
\begin{equation}\label{eq:6new}
    \int_{0}^{T} \Big(\int \int_{|x-y|<\frac 1 M} M  |u_{N}(x) \nabla_{y} \bar v_{M}(y)+\bar v_{M}(y)\nabla_{x}u_{N}(x) |^{2} \,dxdy\Big)
    dt \leq  C_{T} N\|u_{N}(0)\|^{2}_{L^{2}}\|v_{M}(0)\|^{2}_{L^{2}}.
\end{equation}
Next we deduce from \eqref{eq:6new} that
\begin{equation}
  \label{eq:555new}
  \int_{0}^{T} \Big (\int |\nabla_x (v_{M}  u_{N})|^{2}\, dx\Big) dt \leq C_{T} M N\|u_{N}(0)\|^{2}_{L^{2}} \|v_{M}(0)\|^{2}_{L^{2}}.
\end{equation}
Estimate \eqref{eq:555new}, along with a companion easier estimate for
$\int_{0}^{T} \Big( \int |x|^2 | v_{M}  u_{N}|^{2}\, dx \Big)\,dt$, implies
\begin{equation} \label{eq:555newnewnew}
  \int_{0}^{T} \|v_{M} \bar u_{N}\|^{2}_{\mathcal{H}^{1}}\,dt \leq C_{T} M N\|u_{N}(0)\|^{2}_{L^{2}} \|v_{M}(0)\|^{2}_{L^{2}}\,.
\end{equation}
Finally, by a spectral localization argument, we prove that \eqref{eq:555newnewnew} implies \eqref{bilinharmosc}.
\subsection*{Proof of (\ref{eq:6new})} We first remark for later use
that once \eqref{eq:6new} will be established, then we are allowed to replace 
$v_N$ by $Av_N$ (which is still a localized solution to \eqref{eq:1}) and we get:
 \begin{multline}\label{eq:6newold}
    \int_{0}^{T} \Big(\int \int_{|x-y|<\frac 1 M} M |u_{N}(x) \nabla_{y} (A \bar v_{M})(y)+ (A\bar v_{M})(y)\nabla_{x}u_{N}(x)|^{2} \,dxdy\Big) 
    dt\\\leq  C_{T} NM^2\|u_{N}(0)\|^{2}_{L^{2}}\|v_{M}(0)\|^{2}_{L^{2}}.
\end{multline}
Next we focus on the proof of \eqref{eq:6new} and from now on, $T$ is fixed in $(0,+\infty)$. Let  $\rho:\R^2\rightarrow \R$ be a $C^{1}$ function whose derivative is piecewise differentiable,  with $H_{\rho}$ denoting the bilinear form associated to its Hessian (as a distribution), $H_{\rho}(a,b)=\sum_{k,l}(\partial^{2}_{kl}\rho) a_{k}b_{l}$; all $\partial^{2}_{kl}\rho$ are actually piecewise continuous functions and under such assumptions all subsequent integrations by parts are fully justified in the classical sense. We claim that, for any couple of solutions $u,v$ of \eqref{eq:1},
\begin{multline}\label{intervrai}
\int_0^T \Big( \int \int H_{\rho}(x-y)(\bar v(y)\nabla_x u(x)+u(x)\nabla_y \bar v(y), v(y)\nabla_x \bar u(x)+\bar u(x)\nabla_y v(y) )\, dxdy \Big)
dt
\\\leq C_T \|\nabla \rho\|_{L^\infty} (\|v(0)\|_{L^2}^2 \|u(0)\|_{L^2} \|u(0)\|_{{\mathcal H}^1}+
\|u(0)\|_{L^2}^2 \|v(0)\|_{L^2} \|v(0)\|_{{\mathcal H}^1})\end{multline}
where we dropped time dependence for notational simplicity. Following  \cite{FP} we define a convex function $\rho_M:\R\rightarrow \R$,
$$
\rho_M(z)= \begin{cases}\frac M2 z^2+\frac 1{2M}, \quad z\leq \frac 1M\\
z, \quad z>\frac 1M\end{cases}
$$
and we use \eqref{intervrai} with $\rho(x-y)=\rho_M(x_1-y_1)$: we get, by direct computation of the Hessian $H_{\rho}$,
\begin{multline}\label{intervrai1d}
\int_0^T \Big (\int \int_{|x_1-y_1|<\frac 1M} M |(\bar v(y)\partial_{x_1} u(x)+u(x)\partial_{y_1} \bar v(y)|^2\, dxdy \Big )
dt\\\leq C_T( \|v(0)\|_{L^2}^2 \|u(0)\|_{L^2} \|u(0)\|_{{\mathcal H}^1}+ \|u(0)\|_{L^2}^2 \|v(0)\|_{L^2} \|v(0)\|_{{\mathcal H}^1})\end{multline}
where there is no contribution in the region $|x_1-y_1|>\frac 1M$ as $H_{\rho}=0$ there, and we used that $\|\rho_M'(z)\|_{L^\infty}\leq 1$. 
Of course by choosing $\rho(x-y)=\rho(x_2-y_2)$ we  get a similar estimate where $x_1, y_1$ are replaced by $x_2, y_2$ and by combining the two estimates we get 
\eqref{eq:6new} where we noticed that $|x-y|<\frac 1M\subset \max\{|x_1-y_1|, |x_2-y_2|\}<\frac 1M$.
Replacing $u$ and $v$ by $u_N$ and $v_M$ and using spectral localization we get 
\eqref{eq:6new}.

Next we focus on the proof of \eqref{intervrai}. We compute the second derivative w.r.t. time of the functional
$$I_\rho(t)=\int \int |u(x)|^2 \rho(x-y) |v(y)|^2 \, dx dy$$
where for for simplicity we have dropped the time dependence of $u, v$. In order to do so, recall that by the classical virial computation
we get for a solution $w(t,x)$ to \eqref{eq:1} (we drop again time-dependence of $w$ and set $\rho_{y}(x)=\rho(x-y)$ to emphasize that $y$ is a fixed base point here):
\begin{gather}\label{1vir}\frac d{dt} \int \rho_{y}(x) |w(x)|^2\, dx= 2\int \nabla \rho_{y}(x) \cdot \Im (\nabla \bar w(x) w(x))\, dx\\
\label{2vir}\;\;\;\;\;\;\;\;\frac {d^2}{dt^2} \int \rho_{y}(x) |w|^2\, dx= 4\int H_{\rho_{y}} (\nabla w(x), \nabla \bar w(x)) - \int \Delta \rho_{y}(x) \Delta(|w(x)|^2)\, dx -
4 \int x\cdot \nabla \rho_{y}(x)  |w(x)|^2 dx\,,
\end{gather}
where we emphasize that we will not be using more than two derivatives on $\rho_{y}$. Next, using \eqref{1vir} we get
\begin{equation}
  \frac d{dt} I_\rho(t)
  = 2\int \int \nabla \rho(x-y)\cdot \Im (\nabla_x \bar u(x) u(x)) |v(y)|^2\, dx dy
  -2 \int \int \nabla \rho(x-y)\cdot \Im (\nabla_y \bar v(y) v(y)) |u(x)|^2\, dx dy\,.
  \end{equation}
Using that $\|\nabla w\|_{L^2}\leq C \| w\|_{{\mathcal H}^1}$ at fixed time followed by conservation of mass and energy for \eqref{eq:1}, we get
\begin{align}\label{boiundarter}|\frac d{dt} I_\rho(t)|\leq & 2
\|\nabla \rho\|_{L^\infty} (\|v\|_{L^2}^2 \|u\|_{L^2} \|\nabla u\|_{L^2} + \|u\|_{L^2}^2 \|v\|_{L^2} \|\nabla v\|_{L^2})
  \\
  \leq & C \|\nabla \rho\|_{L^\infty} (\|v(0)\|_{L^2}^2 \|u(0)\|_{L^2} \|u(0)\|_{{\mathcal H}^1}
  +\|u(0)\|_{L^2}^2 \|v(0)\|_{L^2} \|v(0)\|_{{\mathcal H}^1} )\,.
  \end{align}
 For later use notice also that using \eqref{1vir} on both mass densities, 
 \begin{align}\label{twodt}
 \int \Big (\int \rho(x-y) \frac d{dt} |u(x)|^2\, dx \Big ) \frac d{dt}|v(y)|^2 \, dy= & 2 \int
  \Big (\int \nabla \rho(x-y) \cdot \Im(\nabla_x \bar u(x) u(x))\, dx\Big ) \frac d{dt} |v(y)|^2 \, dy\\
 = & -4 \int\int H_{\rho}( \Im (\nabla_x \bar u(x) u(x)), \Im (\nabla_y \bar v(y) v(y)) \, dx dy\,.
 \end{align}
 On the other hand by combining %
 \eqref{2vir} and \eqref{twodt},
 \begin{align}
 \frac {d^2}{dt^2} I_\rho(t) %
 = & 4\int\int  H_{\rho}(x-y) (\nabla u(x) ,\nabla \bar u(x)) |v(y)|^2 +4\int \int H_{\rho}(x-y) (\nabla v(y) ,\nabla \bar v(y))|u(x)|^2\, dxdy  \\
 & {} - \int \int \Delta\rho(x-y)\Delta (|u(x)|^2) |v(y)|^2 \, dxdy - \int\int  \Delta \rho(x-y) |u(x)|^2\Delta (|u(y)|^2)\, dxdy \\
 & {} -8 (\int H_{\rho}(x-y) (\Im (\nabla \bar u(x) u(x)),   \Im (\nabla \bar v(y) v(y))) \, dy dx\\
 &{}  -4\Re \int \int (x-y)\cdot  \nabla \rho(x-y) |u(x)|^2 |v(y)|^2 \,dx dy=I+II+III+IV+V+VI\,.
\end{align}
Following \cite{PlVe}, we rewrite $\Delta\rho(x-y)=
-\nabla_x \cdot  \nabla_y \rho(x-y)$ and integrate  by parts w.r.t. to $x$ and $y$:
$$III+IV=8 \int \int H_{\rho}(x-y) (\Re (\bar u(x) \nabla u(x)) , \Re (\bar v(y) \nabla v(y)))\, dxdy\,.$$
Now, thinking about just one direction of derivation, we have the following identity
\begin{multline*}
 4|v|^2(y) |\partial u|^2(x)+ 4|u|^2(x) |\partial v|^2(y)+ 8\frac{(v\partial\bar v+\bar v \partial v)(y)}2 \frac{(u\partial\bar
       u+\bar u \partial u)(x)} 2- 8\frac{(v\partial\bar v-\bar v \partial v)(y)}2\frac{ (\bar
       u\partial u-  u \partial\bar u)(x)}2\\
 =  4|v|^2(y) |\partial u|^2(x)+ 4|u|^2(x) |\partial
 v|^2(y)+ 4 v\partial\bar v(y) u\partial\bar u(x) +4 \bar v \partial
 v(y)\bar u\partial u(x) =   4 |\bar v (y)\partial u(x)+u(x)\partial \bar v(y)|^2\,,
\end{multline*}
which allows to recombine $I+II+III+IV+V$, to get
\begin{multline*}
\frac {d^2}{dt^2} I_\rho(t)= 4\int \int H_{\rho}(x-y)(\bar v(y)\nabla u(x)+u(x)\nabla \bar v(y) ,v(y)\nabla \bar u(x)+\bar u(x)\nabla v(y) )\, dxdy \\
-4\Re \int\int  \nabla \rho(x-y) \cdot (x-y) |v(y)|^2|u(x)|^2\, dxdy.\end{multline*}
After integration in time of the identity above and by recalling \eqref{boiundarter} we get
\begin{multline*}
4\int_0^T \Big(\int \int H_{\rho}(x-y)(\bar v(y)\nabla_x u(x)+u(x)\nabla_y \bar v(y) ,v(y)\nabla_x \bar u(x)+\bar u(x)\nabla_y v(y) ) \, dxdy \Big)
dt\\\leq C \|\nabla \rho\|_{L^\infty}  (\|v(0)\|_{L^2}^2 \|u(0)\|_{L^2} \|u(0)\|_{{\mathcal H}^1}
+\|u(0)\|_{L^2}^2 \|v(0)\|_{L^2} \|v(0)\|_{{\mathcal H}^1} ) 
\\+ 4\int_0^T \Big(\int\int  |\nabla \rho(x-y)| |y-x| |v(y)|^2|u(x)|^2\, dxdy\Big)dt\\
\leq C \|\nabla \rho\|_{L^\infty}  (\|v(0)\|_{L^2}^2 \|u(0)\|_{L^2} \|u(0)\|_{{\mathcal H}^1}
+\|u(0)\|_{L^2}^2 \|v(0)\|_{L^2} \|v(0)\|_{{\mathcal H}^1} ) 
\\+ C_T  \|\nabla \rho\|_{L^\infty} \sup_{t\in (0,T)}
(\|u(t)\|_{L^2}^2 \|v(t)\|_{L^{2}}\| y v(t)\|_{L^2} +\|v(t)\|_{L^2}^2 \|u(t)\|_{L^{2}}\| x u(t)\|_{L^2}^2 )\,.
\end{multline*}
Using  $\|y w\|_{L^2}^2\leq \|w\|_{{\mathcal H}^1}$ and, again, conservation of mass and  energy for \eqref{eq:1}, this estimate implies \eqref{intervrai}.

\subsection*{Proof of (\ref{eq:6new}) $\Rightarrow$ (\ref{eq:555new})}
We need a suitable local elliptic estimate for our operator $A=-\Delta+|x|^{2}$ to reproduce the computation from \cite{FP}.
The next lemma is a modification of Lemma 4.2 in \cite{FP}.
\begin{lem}\label{ellip}
There exists $C>0$ and $\lambda_{0}\geq 1$ such that, for  any smooth function $\phi$ in $\R^2$ and $\lambda\geq \lambda_{0}$, 
the following pointwise estimate holds:
  \begin{equation}
    \label{eq:tracelem}
    |\phi(x)|^2\leq C \lambda^{-2} \int_{|x-y|<{\lambda^{-1}}}
    |A \phi|^2 \,dy +C \lambda^2 \int_{|x-y|<{\lambda^{-1}}} |\phi|^2\,dy, \quad \forall x\in \R^2\,.
  \end{equation}
\end{lem}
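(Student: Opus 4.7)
The plan is to rescale the ball $B(x,\lambda^{-1})$ to the unit ball and invoke the 2D Sobolev embedding $H^2(B(0,1)) \hookrightarrow C^0(\overline{B(0,1)})$, exactly as one does for the Laplacian in [FP]. The novelty is that the rescaled harmonic potential can blow up when $|x|$ is large, so one must choose the scale adaptively. Take $r := 1/\max(\lambda,|x|)$ and set $\psi(z) := \phi(x+rz)$ for $z \in B(0,1)$. The rescaled operator $\tilde A_r \psi := -\Delta_z\psi + V_r(z)\psi$, with $V_r(z) = r^2|x+rz|^2$, then satisfies $\|V_r\|_{L^\infty(B(0,1))} \leq 4$ uniformly in $x\in\R^2$ and $\lambda \geq \lambda_0$ (since $r|x| \leq 1$ and $r \leq 1$), and $\tilde A_r\psi(z) = r^2 (A\phi)(x+rz)$. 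Interior elliptic regularity for second order elliptic operators with bounded $L^\infty$ coefficients, combined with Sobolev embedding, gives $|\psi(0)|^2 \leq C(\|\psi\|_{L^2(B(0,1))}^2 + \|\tilde A_r\psi\|_{L^2(B(0,1))}^2)$ with $C$ independent of $x$ and $\lambda$. Unwinding the change of variables yields
\[
|\phi(x)|^2 \leq C r^{-2}\!\!\int_{B(x,r)}\!|\phi|^2\,dy + C r^2\!\!\int_{B(x,r)}\!|A\phi|^2\,dy .
\]

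When $|x| \leq \lambda$ we have $r = \lambda^{-1}$ and the preceding inequality coincides with the claimed \eqref{eq:tracelem}. When $|x|>\lambda$ we have $r = 1/|x|$, so $r^2 \leq \lambda^{-2}$ and $B(x,r) \subset B(x,\lambda^{-1})$; this already puts the $A\phi$-term in the desired form. The remaining difficulty is that the first term carries the coefficient $r^{-2}=|x|^2$, which exceeds $\lambda^2$. To absorb this, I would exploit the coercivity of the harmonic potential on $B(x,1/|x|)$, where $|y| \geq |x|/2$ (for $|x|$ beyond a fixed threshold), so that $|x|^2 |\phi(y)|^2 \leq 4 |y|^2 |\phi(y)|^2$ pointwise. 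The integration-by-parts identity on $\R^2$,
\[
\|\Delta\phi\|_{L^2}^2 + 2\||y|\nabla\phi\|_{L^2}^2 + \||y|^2\phi\|_{L^2}^2 = \|A\phi\|_{L^2}^2 + 2\|\phi\|_{L^2}^2,
\]
obtained by expanding $|A\phi|^2 = |{-}\Delta\phi+|y|^2\phi|^2$ and integrating by parts twice using $\nabla\cdot y = 2$, localizes via a smooth cutoff $\chi$ supported in $B(x,2\lambda^{-1})$ and equal to $1$ on $B(x,\lambda^{-1})$. The commutator terms $\int|\nabla\chi|^2|\nabla\phi|^2$ produced by this localization are controlled by a local Caccioppoli inequality for $A$ (obtained by testing the equation against $\chi^2\bar\phi$), which gives $\int|\nabla\phi|^2\chi^2 \lesssim \lambda^{-2}\int|A\phi|^2 + \lambda^2\int|\phi|^2$ on the enlarged ball. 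Combining these ingredients yields $|x|^2\int_{B(x,1/|x|)}|y|^2|\phi|^2 \lesssim \lambda^{-2}\int_{B(x,2\lambda^{-1})}|A\phi|^2 + \lambda^2\int_{B(x,2\lambda^{-1})}|\phi|^2$, which closes the estimate up to harmless adjustments of the ball radii by absolute constants.

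The main obstacle is precisely the regime $|x|>\lambda$: the naive rescaling on $B(x,\lambda^{-1})$ produces an uncontrolled $|y|^4|\phi|^2$ remainder, while the finer rescaling on $B(x,1/|x|)$ produces a constant proportional to $|x|^2$ rather than the required $\lambda^2$. Only the combination of the localized virial-type identity above with the pointwise lower bound $|y| \geq |x|/2$ on the small ball allows one to trade the excess $|x|^2$-factor against the available $\lambda^{-2}\int|A\phi|^2$, recovering a constant uniform in $x$.
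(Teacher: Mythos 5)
Your proof is correct (up to a harmless constant: in dimension two your integration-by-parts identity should read $\|A\phi\|_{L^2}^2+4\|\phi\|_{L^2}^2$ on the right, not $2\|\phi\|_{L^2}^2$, cf.\ \eqref{eq:H2norm}), but it is organized genuinely differently from the paper's argument. You rescale adaptively at scale $r=1/\max(\lambda,|x|)$ so that the rescaled potential stays bounded by $4$, apply the interior elliptic estimate directly to the full operator, and then must repair the regime $|x|>\lambda$, where the zeroth-order coefficient comes out as $|x|^2$ rather than $\lambda^2$; you trade this excess against the coercivity of the potential on $B(x,1/|x|)$ via the localized virial and Caccioppoli identities. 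The paper instead keeps the fixed scale $\lambda^{-1}$ for every $x$: it quotes the estimate of \cite{FP} for the pure Laplacian, $|\phi(x)|^2\lesssim \lambda^{-2}\int_{|x-y|<(4\lambda)^{-1}}|\Delta\phi|^2\,dy+\lambda^2\int_{|x-y|<(4\lambda)^{-1}}|\phi|^2\,dy$, and reduces the lemma to the local comparison \eqref{aim} between $\|\Delta\phi\|_{L^2}$ and $\|A\phi\|_{L^2}$ on concentric balls, proved with exactly the two identities you invoke (the expansion \eqref{eq:H2norm} applied to $\chi_\lambda\phi$, in which $|y|^4|\phi|^2$ and $|y|^2|\nabla\phi|^2$ carry favorable signs, and the Caccioppoli-type bound for $\int\tilde\chi_\lambda|\nabla\phi|^2$). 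The paper's route buys uniformity in $x$ for free --- no case distinction is ever needed because the potential only enters with a good sign --- while your route makes the elliptic input more transparent (a uniformly elliptic operator with bounded potential on the unit ball) at the price of the extra large-$|x|$ argument. That argument does close, but it can be shortened: the Caccioppoli identity alone already yields $\int|y|^2\tilde\chi_\lambda|\phi|^2\lesssim\lambda^{-2}\int|A\phi|^2+(1+\lambda^2)\int|\phi|^2$, so after the pointwise bound $|x|\le 2|y|$ on $B(x,1/|x|)$ (valid once $\lambda_0\ge\sqrt 2$) you do not also need the $|y|^4$ identity in that step; and the enlargements of the ball radius by fixed factors are indeed harmless, as they amount to replacing $\lambda$ by $\lambda/2$ and adjusting $C$ and $\lambda_0$.
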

\begin{proof}
Without loss of generality we may restrict to real-valued $\phi$. The lemma is proved in \cite{FP} if we replace in the r.h.s. the operator $A$ by $-\Delta$ 
and  the domain of integration by the smaller domain $|x-y|<(4\lambda)^{-1}$ (this fact follows from classical elliptic theory and Sobolev embedding for $\lambda=1$ and then any $\lambda>0$ by rescaling). Thus we conclude provided that we prove
\begin{equation}\label{aim}
\lambda^{-2} \int_{|x-y|<{(4\lambda)^{-1}}}
    |\Delta \phi|^2 \,dy\leq C \lambda^{-2} \int_{|x-y|<{\lambda^{-1}}}
    |A \phi|^2 \,dy+C \lambda^{2} \int_{|x-y|<{\lambda^{-1}}}
    |\phi|^2 \,dy.\end{equation}
In order to prove this estimate we expand the square $\int |\Delta f|^{2} =\int |Af-|y|^{2}f |^{2}$ and after integrations by parts we get
  \begin{equation}
    \label{eq:H2norm}
      \int (|\Delta f|^{2}+ |y|^{4} |f|^{2}+ 2 |y|^{2}|\nabla f|^{2})\,dy = \int (|Af|^{2}+4 |f|^{2})\,dy
  \end{equation}
for any real-valued function $f\in C^\infty_0(\R^2)$.
Next we pick $f(y)=\chi_{\lambda}(y) \phi(y)$, where $\chi_{\lambda}(y)=\chi( \lambda(y-x))$, with
$\chi(|z|)=1$ on $|z|<\frac 14$ and $\chi(|z|)=0$ on $|z|>\frac 12$. All subsequent cutoffs w.r.t. $y$ will be centered at $x$. Expanding $\int |\Delta(\chi_{\lambda} \phi)|^{2}$ and $\int |A(\chi_{\lambda} \phi)|^{2}$ and replacing in the previous identity we get:
\begin{multline}\label{impsa}
  \int (|\chi_{\lambda}|^2 |\Delta \phi|^{2}+ |y|^{4} |\chi_{\lambda}|^2 |\phi|^{2}+2 |y|^{2}|\nabla(\chi_{\lambda} \phi)|^{2})\, dy \\
  = \int (|\chi_{\lambda}|^2|Av|^{2}+4|\chi_{\lambda}|^2| \phi|^{2}) \, dy- 2 \int \chi_{\lambda} |y|^{2} \phi (2\nabla \chi_{\lambda}\cdot \nabla \phi+ \Delta \chi_{\lambda} \phi )\, dy\,.
 \end{multline}
  By Cauchy-Schwarz and elementary manipulations we estimate the last term on the r.h.s. as follows, for every $\mu>0$ and with a universal constant $C>0$:
  $$\Big |\int \chi_{\lambda} |y|^{2}  \phi (2\nabla \chi_{\lambda}\cdot  \nabla \phi+\phi  \Delta \chi_{\lambda})\, dy\Big |^2 \leq C\mu  \int |y|^4 |\chi_{\lambda}|^2 |\phi|^2\, dy
 +\frac C\mu \int (|\nabla \chi_{\lambda} \cdot \nabla \phi|^{2}+ |\Delta \chi_{\lambda}|^{2} |\phi|^{2})\, dy.$$
If we choose the constant $\mu$ small enough then we can absorb $\int |y|^4 |\chi_{\lambda} v|^2 dx$ on the l.h.s. in \eqref{impsa}
and by neglecting some positive terms we get, abusing notation for the constant $C$,
\begin{equation}\label{impsash}
  \int |\chi_{\lambda}|^2| \Delta \phi|^{2}\,dy \leq C \int (|\chi_{\lambda}|^2| A\phi|^{2}+4|\chi_{\lambda}|^2| \phi|^{2} + |\nabla \chi_{\lambda} \cdot \nabla \phi|^{2}+ |\Delta \chi_{\lambda}|^{2})
 |\phi|^{2} \, dy
 \end{equation}
 and by elementary considerations
 \begin{equation*}
  \int_{|x-y|<(4\lambda)^{-1}} |\Delta \phi|^{2}\, dy  \leq C \int_{|x-y|<(2\lambda)^{-1}} |A\phi |^{2}\,dy+ C \lambda^{2} \int \tilde \chi_{\lambda}  |\nabla \phi |^{2}\,dy +
 C(1+\lambda^2) \int_{|x-y|<(2\lambda)^{-1}} |\phi|^{2}\, dy
 \end{equation*}
  where $\tilde \chi_{\lambda}$ is a suitable enlargement of $\chi_{\lambda}$, namely
  $\tilde \chi_\lambda(y)=\tilde \chi \big( \frac {y-x} \lambda\big)$, with
$\tilde \chi(|z|)=1$ on $|z|<\frac 12$ and $\tilde \chi(|z|)=0$ on $|z|>1$. Then \eqref{aim} follows provided that 
$$\int \tilde \chi_{\lambda} |\nabla \phi|^{2}dy\leq C \lambda^{-2} \int_{|x-y|<{\lambda^{-1}}}
    |A \phi|^2 \,dy + C\lambda^2 \int_{|x-y|<{\lambda^{-1}}} |\phi|^2\,dy.$$
 In order to do that we write (either integrating by parts or replacing $-\Delta$ by $A-|x|^2$)
  \[
    -2  \int \tilde \chi_{\lambda}  \phi \Delta \phi\, dy=2\int \tilde \chi_{\lambda} |\nabla  \phi |^{2}\, dy- \int \Delta \tilde \chi_{\lambda}  |\phi|^{2}
    \, dy=2  \int \tilde \chi_{\lambda} \phi A \phi  \, dy-2\int |y|^{2} \tilde \chi_{\lambda} |\phi|^{2}\, dy
  \]
  and hence
  \begin{multline*}
   2 \int \tilde \chi_{\lambda} |\nabla  \phi |^{2}\, dy+ 2\int |y|^{2} \tilde \chi_{\lambda} |\phi |^{2} \, dy =-2   \int \tilde \chi_{\lambda}  \phi A \phi
    \, dy+ \int \Delta \tilde \chi_{\lambda}  |\phi|^{2} \, dy\\
     \leq  C \lambda^{-2}\int_{|x-y|<\lambda^{-1}} |A \phi|^{2} \, dy+ C (1+\lambda^{2})\int_{|x-y|<\lambda^{-1}} |\phi|^{2}\, dy
  \end{multline*}
  where we used Cauchy-Schwarz at the last step.
  \end{proof}

We now proceed to prove that \eqref{eq:6new} $\Rightarrow$ \eqref{eq:555new}. The first term in the square at the l.h.s. of \eqref{eq:6new} turns out to be lower order:  we compute
by change of variable, Cauchy-Schwarz inequality and  Strichartz estimate,
\begin{multline}
  \label{eq:8new}
   \int_{0}^{T} \Big(\int \int_{|x-y|<\frac 1 M} |u_{N}(x) \nabla_{y} \bar v_{M}(y)|^2 dxdy \Big ) dt=
    \int_{0}^{T} \int_{|z|<\frac 1 M} |u_{N}(x) \nabla_{x} \bar v_{M}(x-z)|^{2} \,dxdzdt 
    \\
    \leq \int_{|z|<\frac 1 M} \| u_{N}\|_{L^{4}((0,T);L^4)}^{2}\|\nabla v_{M}\|_{L^{4}
    ((0,T);L^4)}^{2}\, dz
    \leq C_{T}  \|u_{N}(0)\|^{2}_{2}  \|v_{M}(0)\|^{2}_{L^{2}} 
\end{multline}
where at the last step we used that $\|\nabla v_{M}\|_{L^{4}
    ((0,T);L^4)}\leq C_TM\|v_{M}(0)\|_{L^{2}}$. In turn this bound follows by noticing that 
    $\nabla v_{M}$ is solution to the inhomogeneous equation associated with \eqref{eq:1} with forcing term $2x v_M$. Hence
    by the inhomogeneous Strichartz estimate,  placing the forcing term in $L^1((0,T);L^2)$,
\begin{equation}
  \label{eq:9new}
    \| \nabla v_{M}\|_{L^{4}((0,T);L^{4})} \leq C \| \nabla v_{M}(0)\|_{L^{2}}+C\||x|v_{M}\|_{L^1((0,T);L^{2})}\leq C_T \| v_{M}(0)\|_{\mathcal{H}^{1}}
    \leq C_T M \| v_{M}(0)\|_{L^2} \,,
\end{equation}
where we used conservation of energy for \eqref{eq:1} and the bound
$\||x| w\|_{L^2}\leq C \|w\|_{{\mathcal H}^1}$ for every time independent function.\\
Recall that \eqref{eq:8new} holds with $v_{M}$ replaced by $Av_{M}$ (it is still a solution to \eqref{eq:1}), hence 
 we get:
  \begin{equation}
    \label{eq:7newnew}
    \int_{0}^{T}\Big( \int \int_{|x-y|<\frac 1 M}  |u_{N}(x) \nabla_y(A \bar v_{M})(y)|^{2} \,dxdy\Big)dt \leq C_{T} 
    M^4 \|u_{N}(0)\|^{2}_{L^{2}} \|v_{M}(0)\|^{2}_{L^{2}}\,.
  \end{equation}
   We now proceed using the Lemma \ref{ellip} and we get
  \begin{multline*}
         \int_{0}^{T} \Big(\int |\bar v_{M}(x)\nabla_{x}u_{N}(x)|^{2} \,dx\Big) dt 
         \leq   C  \int_{0}^{T} \Big( \int \int_{|x-y|<\frac 1 M}  M^{2}|\bar v_{M}(y)\nabla_{x}u_{N}(x)|^{2} \\
         {}+\frac 1 {M^{2}} |A\bar v_{M}(y)\nabla_{x}u_{N}(x)|^{2} \,dxdy
         \Big) dt
  \end{multline*} that by  \eqref{eq:6new} and \eqref{eq:6newold} implies
  \begin{multline*}
  (\dots)\leq C\int_{0}^{T} \Big( \int \int_{|x-y|<\frac 1 M}  M^{2}|u_{N}(x) \nabla_{y} \bar v_{M}(y)|^{2}+\frac 1 {M^{2}} |u_{N}(x)\nabla_{y} (A\bar v_{M})(y)|^{2} \,dxdy
         \Big) dt 
 \\ +C_{T} NM\|u_{N}(0)\|^{2}_{L^{2}}\|v_{M}(0)\|^{2}_{L^{2}}\,.
  \end{multline*}
Combining the above estimate with \eqref{eq:8new} and \eqref{eq:7newnew} we obtain
\begin{multline}\label{lensa}
\int_{0}^{T} \Big(\int |\bar v_{M}(x)\nabla_{x}u_{N}(x)|^{2} \,dx\Big) dt \leq C_{T} (M^2 +NM)\|u_{N}(0)\|^{2}_{L^{2}}\|v_{M}(0)\|^{2}_{L^{2}}
\\\leq C_T NM \|u_{N}(0)\|^{2}_{L^{2}}\|v_{M}(0)\|^{2}_{L^{2}}\,.
\end{multline}
On the other hand by Cauchy-Schwarz, Strichartz estimate and \eqref{eq:9new} we have
\begin{equation*}
  \int_{0}^{T} \Big(\int |u_{N}(x)\nabla_x \bar v_{M}(x)|^2 \, dx\Big)dt \leq 
   \| u_{N}\|_{L^{4}((0,T);L^4)}^{2}\|\nabla v_{M}\|_{L^{4}
    ((0,T);L^4)}^{2} \leq C_{T} M^{2}  \|v_{M}(0)\|_{L^{2}}^{2} \|u_{N}(0)\|^{2}_{L^{2}}\,.
\end{equation*}
Therefore combining this last estimate with  \eqref{lensa}  we get  \eqref{eq:555new}.
\subsection*{Proof of (\ref{eq:555newnewnew})} Due to \eqref{eq:555new}, it suffices to prove
\begin{equation}\label{bertm}
  \int_{0}^{T} \Big( \int  |x|^2 |v_{M} \bar u_{N}|^{2} dx\Big) dt \leq C_{T} M N\|u_{N}(0)\|^{2}_{L^{2}} \|v_{M}(0)\|^{2}_{L^{2}}.
\end{equation}
By H\"older inequality we have

\begin{equation}\label{bertmaxi}
  \int_{0}^{T}\Big ( \int  |x|^{2} |v_{M} \bar u_{N}|^{2}\,dx \Big)dt 
  \leq \||x| v_M\|^2_{L^4((0, T);L^4)} \|u_N\|^2_{L^4((0, T);L^4)}\,.
\end{equation}
Next notice that  $|x|^{2}v_{M}$ is solution to the inhomogeneous equation associated with \eqref{eq:1}, with source term $-4v_{M}-2x\cdot \nabla v_{M}$. Again, using Strichartz and placing the source term in $L^1((0,T); L^2)$,
\begin{align}
  \label{eq:99}
    \| |x|^{2} v_{M}\|_{L^{4}((0,T);L^{4})} \leq & C  \| |x|^{2} v_{M}(0)\|_{L^{2}}+C \|v_{M}\|_{L^1((0,T);L^{2})}+C\| x\cdot \nabla v_{M}
                                                   \|_{L^1((0,T);L^{2})} \\
   \leq & C_T \| v_{M}(0)\|_{\mathcal{H}^{2}}\leq C_T M^2 \| v_{M}(0)\|_{L^{2}}
\end{align}
where we used the time independent estimate 
$\| x\cdot \nabla w\|_{L^2}\leq C \|w\|_{{\mathcal H}^2}$ (see \eqref{eq:H2norm}) and conservation of the ${\mathcal H}^2$ norm for \eqref{eq:1}.
Interpolation between \eqref{eq:99} and the Strichartz estimate
$\|v_{M}\|_{L^{4}((0,T);L^{4})}\leq C \|v_M(0)\|_{L^2}$ implies
$\| |x| v_{M}\|_{L^{4}((0,T);L^{4})} \leq C M \|v_M(0)\|_{L^2}$. Combining this estimate, Strichartz for $u_{N}$ and \eqref{bertmaxi} we obtain \eqref{bertm} (actually, a stronger version of \eqref{bertm} as on the r.h.s. we  get $M^2$).
\subsection*{Proof of the implication (\ref{eq:555newnewnew}) $\Rightarrow$ (\ref{bilinharmosc})}
We can write
\begin{equation}
  \label{eq:10}
  \| v_{M}  u_{N}\|^{2}_{L^{2}((0,T);L^2)}=\sum_{K\in 2^{\mathbb{N}}} \|\Delta_{K}( v_{M}u_{N})\|^{2}_{L^2((0,T);L^2)}
\end{equation}
If $K>N$, we may forget about $\Delta_{K}$ and use \eqref{eq:555newnewnew} in order to get
\begin{equation}\label{derderder}
\sum_{K>N} \|\Delta_{K}(v_{M} u_{N})\|^{2}_{L^2((0,T);L^2)}
\leq C \sum_{K>N} (1+K)^{-2}
 \|  v_{M}u_{N}\|_{L^2((0,T);{\mathcal H}^1)}^{2} 
 \leq C_{T} M N^{-1}\|u_{N}(0)\|^{2}_{L^{2}} \|v_{M}(0)\|^{2}_{L^{2}}  
\end{equation}
For $K\leq N$, denote $S_{N}=\sum_{K\leq N} \Delta_{K}$ and write directly
\begin{equation}\label{tilde}
  S_{N}( v_{M} u_{N})=S_{N} ( v_{M} N^{-2} A \tilde u_{N})
\end{equation}
where $\tilde u_{N}=\tilde \Delta_{N} u_{N}$ and the localization operator $\tilde \Delta_{N}$ was chosen so that $N^{-2} A \tilde \Delta_{N}$ is the identity on the support of $\Delta_{N}$. We may now write $ v_{M} A \tilde u_{N}= A ( v_{M} \tilde u_{N})+ \tilde u_{N} \Delta  v_{M}+2 \nabla  v_{M} \cdot \nabla \tilde u_{N}$ and hence
by \eqref{tilde}, uniform boundedness of $S_N$  and $N^{-1}\sqrt A S_{N}$ on $L^{2}$, we get
\begin{align*}
  \| S_{N}( v_{M} u_{N})\|_{L^{2}}^{2} \leq &  CN^{-2} \|N^{-1} \sqrt A S_{N} (\sqrt A( v_{M}\tilde u_{N})\|_{L^{2}}^{2}+ CN^{-4} (\|  \tilde u_{N} \Delta v_{M}\|^{2}_{L^{2}}+
  \|\nabla  v_{M} \cdot \nabla \tilde u_{N}\|^{2}_{L^{2}})\\
  \leq &  C N^{-2} \| \sqrt A( v_{M}\tilde u_{N})\|_{L^{2}}^{2}+ C N^{-4}\| \Delta  v_{M}\|^{2}_{L^{4}} \|\tilde u_{N}\|^{2}_{L^{4}}+ C N^{-4} \|\nabla  v_{M} \|^{2}_{L^{4}}\|\nabla \tilde u_{N}\|^{2}_{L^{4}}\,.
  \end{align*} 
After integration in time, using Strichartz estimates to control $L^4$ norms (use \eqref{eq:9new}
  to control $\|\nabla v_M\|_{L^{4}_{t,x}}$ and a similar argument to control $\|\nabla \tilde u_N\|_{L^{4}_{t,x}}$) and \eqref{eq:555newnewnew}, we get

  \begin{align*}
    \int_{0}^{T}  \| S_{N}( v_{M} u_{N})\|_{L^{2}}^{2} \,dt  \leq & C N^{-2} \int_{0}^{T}  \|  v_{M}\tilde u_{N}\|_{\mathcal{H}^{1}}^{2}\,dt + C_T M^2N^{-4}(M^{2}+N^{2})\|  v_{M}(0)\|^{2}_{L^{2}} \|\tilde u_{N}(0)\|^{2}_{L^{2}} \\
    \leq &  C_T  MN^{-1} \| v_{M}(0)\|^{2}_{L^{2}}\|\tilde u_{N}(0)\|^{2}_{L^{2}}+ C_{T} M^{2}N^{-2}\| v_{M}(0)\|^{2}_{L^{2}} \|\tilde u_{N}(0)\|^{2}_{L^{2}}\,,
\end{align*}
and we complete the proof with $\|\tilde u_{N}(0)\|_{L^{2}}\leq C\|u_{N}(0)\|_{L^{2}}$.\qed

\end{document}